\theoremstyle{plain}
\newtheorem{thm}{Theorem}[section]
\newtheorem{cor}[thm]{Corollary}
\newtheorem{rem}[thm]{Remark}
\newtheorem{ques}[thm]{Question}
\newtheorem{conj}[thm]{Conjecture}
\newtheorem{exam}[thm]{Example}
\numberwithin{equation}{section}
\def\cal{\mathcal}
\def\bbb{\mathbb}
\def\op{\operatorname}
\renewcommand{\phi}{\varphi}
\newcommand{\Z}{\bbb{Z}}
\newcommand{\Q}{\bbb{Q}}
\newcommand{\C}{\bbb{C}}
\begin{document}
\title{Rational points on certain elliptic surfaces}
\author{Maciej Ulas}

\subjclass[2000]{Primary 11D25, 11D41; Secondary 11G052}

\keywords{Diophantine equations, elliptic surfaces}
\date{}

\begin{abstract}
Let $\mathcal{E}_{f}:y^2=x^3+f(t)x$, where $f\in\Q[t]\setminus\Q$,
and let us assume that $\op{deg}f\leq 4$. In this paper we prove
that if $\op{deg}f\leq 3$, then there exists a rational base
change $t\mapsto\phi(t)$ such that there is a non-torsion section
on the surface $\cal{E}_{f\circ\phi}$. A similar theorem is valid
in case when $\op{deg}f=4$ and there exists $t_{0}\in\Q$ such that
infinitely many rational points lie on the curve
$E_{t_{0}}:y^2=x^3+f(t_{0})x$. In particular, we prove that if
$\op{deg}f=4$ and $f$ is not an even polynomial, then there is a
rational point on $\cal{E}_{f}$. Next, we consider a surface
$\cal{E}^{g}:y^2=x^3+g(t)$, where $g\in\Q[t]$ is a monic
polynomial of degree six. We prove  that if the polynomial $g$ is
not even, there is a rational base change $t\mapsto\psi(t)$ such
that on the surface $\cal{E}^{g\circ\psi}$ there is a non-torsion
section. Furthermore, if there exists $t_{0}\in\Q$ such that on
the curve $E^{t_{0}}:y^2=x^3+g(t_{0})$ there are infinitely many
rational points, then the set of these $t_{0}$ is infinite. We
also present some results concerning diophantine equation of the
form $x^2-y^3-g(z)=t$, where $t$ is a variable.
\end{abstract}

\maketitle

\bigskip

\section{Introduction}

Let $\cal{E}$ be an elliptic surface given by the equation
\begin{equation*}
\cal{E}: y^2z=x^3+A(t)xz^2+B(t)z^3,
\end{equation*}
where $A,\;B\in\Q[t]$. The discriminant for $\cal{E}$ is defined
by $\Delta(t)=-16(4A(t)^3+27B(t)^2)$, while $j$-invariant is
defined by $j(t)=-1728(4A(t))^3/\Delta(t)$. We call the surface
$\cal{E}$ isotrivial if its $j$-invariant is constant. We say that
the surface $\cal{E}$ splits if there exists an elliptic curve $C$
such that $\cal{E} \simeq C \times \bbb{P}$ over $\Q$. In the
sequel by an elliptic surface we mean a non-split one. There is a
natural projection on $\cal{E}$ given by $\pi:\cal{E}\ni
([x:y:z],\;t)\mapsto t\in \bbb{P}.$ The mapping $\sigma:\bbb{P}
\rightarrow \cal{E}$ fulfilling a condition
$\pi\circ\sigma=id_{\bbb{P}}$ will be called a section on
$\cal{E}$. Throughout the paper, by a section we mean one defined
over $\Q$. Let us note that there is always zero section on
$\cal{E}$ given by $\sigma_{0}=([0:1:0],\;t)$. We can look on the
surface $\cal{E}$ as on an elliptic curve defined over $\Q(t)$.
Therefore, we have the Mordell-Weil type theorem for $\cal{E}$,
which says that the set of sections (or equivalently points on
$\cal{E}$ defined over $\Q(t)$) forms a finitely generated abelian
group. Because for all but finitely many $t\in\Q$ a fibre
$\cal{E}$ of the mapping $\pi$ is an elliptic curve, a natural
question arises: what can we say about the set of $t\in\Q$ such
that the elliptic curve $\cal{E}_{t}$ has a positive rank? In case
when we have a non-torsion section on $\cal{E}$, this question
follows trivially from Silverman's specialization theorem
(\cite{Sil}, page 368). This theorem says that for all but
finitely many $t\in\Q$ the curve $\cal{E}_{t}$ has a positive
rank. The second interesting question concerns the existence of
rational curves on the surface $\cal{E}$. Let us note that the
existence of a rational curve on $\cal{E}$, say
$(x(u),\;y(u),\;\psi(u)),$ gives us a rational base change
$t=\psi(u)$ such that $\sigma=(x(u),\;y(u))$ is a section on the
surface $\cal{E}_{\psi}: y^2=x^3+A(\psi(u))x+B(\psi(u))$. As we
will see, in many cases $\sigma$ is a non-torsion section. It is
worth noting here that the problem of this kind was considered in
Whitehead's paper (\cite{Whi}). He proved that a rational curve
lies on the surface given by the equation $z^2=f(x,y)$, where
$f\in\Q[x,y]$ and $\op{deg}f=3$. It is easy to see that the
surface defined in this way is birationally equivalent to the
surface $\cal{E}$ for certain $A,\;B\in\Q[t]$ with $\op{deg}A\leq
2,\;\op{deg}B\leq 3.$

Let us also note that the existence of a rational base change
$t=\psi(u)$ such that we have a non-torsion section on
$\cal{E}_{\psi}$, and Silverman's specialization theorem imply
that for all but finitely many $u\in\Q$, each fibre
$\cal{E}_{\psi(u)}$ has a dense set of rational points.

In Section 2 we consider a surface of the form $\cal{E}_{f}:
y^2=x^3+f(t)x$, where $f\in\Q[t]$ and $\op{deg}f\leq 4$. If
$\op{deg}f\leq 3$, then we show that there exists a rational base
change $t=\phi(s)$ such that there is a non-torsion section on the
surface $\cal{E}_{f\circ\phi}$. A similar theorem is proved in
case when $\op{deg}f=4$ and with the assumption that there exists
$t_{0}\in\Q$ such that there are infinitely many rational points
on the curve $E_{t_{0}}: y^2=x^3+f(t_{0})x$. In particular, we
prove that if the polynomial of degree four is not even, then
there is a non-trivial rational point on the surface
$\cal{E}_{f}$.

In Section 3 we consider a surface of the form $\cal{E}^{g}:
y^2=x^3+g(t)$, where $g\in\Q[t]$ is a monic polynomial of degree
six. In this case we prove that if $g$ is not an even polynomial,
then there is a rational base change $t=\chi(u)$ such that there
exists a non-torsion section on $\cal{E}^{g\circ\chi}$. Moreover,
in case when the polynomial $g$ is even, and there exists
$t_{0}\in\Q$ such that the curve $E^{t_{0}}: y^2=x^3+g(t_{0})$
contains infinitely many rational points, then the set of
$t_{0}\in\Q$ such that $E^{t_{0}}$ has a positive rank is
infinite.

In Section 4 we present some results concerning diophantine
equation of the form
\begin{equation*}
x^2-y^3-g(z)=t,
\end{equation*}
where $g(z)=z^6+az^4+bz^3+cz^2+dz+e\in\Z[z]$ and $t$ is a
variable. We will deal with the solution of this equation in the
ring of polynomials $\Q[t]$. In particular, we prove that if
$a\equiv 1 \pmod 2$ and $b\neq 0$, then the above equation has
infinitely many solutions in $\Q[t]$.

In Section 5 we present some results about rational points on
certain non-isotrivial elliptic surfaces.

\section{Rational points on $\cal{E}_{f}:y^2=x^3+f(t)x$}

Let $f\in\Q[t]\setminus\Q$ and let us assume that $\op{deg}f\leq
4$ and $f$ has at least two different complex roots. For such $f$
we consider a surface $\cal{E}_{f}$ given by the equation
\begin{equation*}
\cal{E}_{f}: y^2=x^3+f(t)x.
\end{equation*}
Because $f$ does not have a root of multiplicity four, our
elliptic surface $\cal{E}_{f}$ is non-split. For a given  $t\in\Q$
let us denote the curve $y^2=x^3+f(t)x$ by $E_{t}$. It is worth
noting that for a fixed $t\in\Q$, the torsion part of the group
$E_{t}(\Q)$ is isomorphic to one of the following (\cite{Sil},
page 323): $\Z/4\Z$, if $f(t)=4$;\;$\Z/2\Z\times \Z/2\Z$, if
$-f(t)$ is a square;\; $\Z/2\Z$ if $f(t)$ does not fulfill any of
these conditions. As an immediate consequence we obtain that if
there is a rational base change $t=\beta(u)$ such that we have a
section $\sigma=(x,\;y)$ with $y\neq 0$ on the surface
$\cal{E}_{f\circ \beta}$, then $\sigma$ is a non-torsion section.

We show the following

\pagebreak

\begin{thm}\label{thm1}
\noindent
\begin{enumerate}
\item If $\op{deg}f\leq 3$, then there exists a rational base
change $t=\phi(s)$ such that there is a non-torsion section on the
surface $\cal{E}_{f\circ\phi}$.

\item If $\op{deg}f=4$ and there is $t_{0}\in\Q$ such that the
curve $E_{t_{0}}$ has infinitely many rational points, then there
exists a rational base change $t=\psi(r)$ such that there is a
non-torsion section on the surface $\cal{E}_{f\circ\psi}$.
\end{enumerate}
\end{thm}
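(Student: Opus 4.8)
The plan is to reduce both parts to the construction of a single rational curve on the surface and then invoke the Remark preceding the theorem. By that Remark, once we produce a non-constant rational map $t=\phi(s)$ together with rational functions $x(s),y(s)\in\Q(s)$ satisfying $y(s)^2=x(s)^3+f(\phi(s))\,x(s)$ with $y(s)\not\equiv 0$, the resulting section on $\cal{E}_{f\circ\phi}$ is automatically non-torsion. So both parts become: exhibit a rational curve on $\cal{E}_f$ which dominates the $t$-line and meets the locus $y\neq 0$. Before constructing it I would normalise $f$: an affine substitution $t\mapsto \gamma t+\delta$ kills the subleading coefficient, and the weighted scaling $(x,y,f)\mapsto(\lambda^2x,\lambda^3y,\lambda^4f)$ lets me adjust the leading coefficient, so I may assume $f$ is in a convenient reduced shape for each degree.

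For the construction I would search for a section along which $y$ is proportional to $x$. Writing $y=s\,x$ and using $y^2=x(x^2+f(t))$ forces $x^2-s^2x+f(t)=0$, hence $x=\tfrac12\big(s^2\pm\sqrt{s^4-4f(t)}\big)$ and $y=sx$. Thus it suffices to choose the base change $t=\phi(s)$ so that $s^4-4f(\phi(s))$ becomes a perfect square in $\Q[s]$; any such $\phi$ then yields rational $x(s),y(s)$ with $y\not\equiv 0$. For $\op{deg}f=1$ the surface is already rational (one solves linearly for $t$), and for $\op{deg}f=2$ the expression $s^4-4f(\phi(s))$ is a quartic whose squareness reduces to finding a rational point on an explicit conic, which I would produce directly. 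The substantive case is $\op{deg}f=3$: taking $\phi$ linear makes $s^4-4f(\phi(s))$ a monic quartic, and requiring it to equal $(s^2+\tfrac{c_3}{2}s+e)^2$ gives three coefficient equations in the unknown $e$ and the two parameters of $\phi$.

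The main obstacle is solving this coefficient system over $\Q$ rather than merely over $\overline{\Q}$: when $f$ is an irreducible cubic there is no rational root to exploit and the naive match can force an irrational $e$. The point of allowing a genuine base change is exactly to supply the extra parameters needed to force a rational solution; I would either take $\phi$ of slightly higher degree to gain freedom, or equivalently reinterpret the squareness condition as the existence of a rational point on the auxiliary conic attached to the $s^2,s^1,s^0$ coefficients and exhibit that point explicitly. Once $s^4-4f(\phi(s))$ is a square, $x(s),y(s)$ are rational and, since $s\,x(s)\not\equiv 0$, the Remark finishes part (1).

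For part (2) the same engine applies, the new feature being that $\op{deg}f=4$ makes $s^4-4f(\phi(s))$ (for $\phi$ linear) a genuine quartic whose associated curve has genus one, so squareness can no longer be arranged formally and a rational point must be imported. This is precisely where the hypothesis enters: infinitely many points on $E_{t_0}$ means $E_{t_0}$ has positive rank, hence carries a non-torsion point $(x_0,y_0)$ with $y_0\neq 0$. I would centre the base change at $t_0$, requiring $\psi(r_0)=t_0$, and build the rational curve so that it passes through $(x_0,y_0)$ over $r=r_0$; the seed point $(x_0,y_0)$ plays the role of the rational point that trivialises the relevant genus-one obstruction and lets the parametrisation go through. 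The resulting section again has $y\not\equiv 0$ because $y_0\neq 0$, so by the Remark it is non-torsion, completing part (2). The delicate step throughout is guaranteeing the needed rational point over $\Q$: unconditionally available from the base-change parameters when $\op{deg}f\le 3$, and supplied by the arithmetic hypothesis on $E_{t_0}$ when $\op{deg}f=4$.
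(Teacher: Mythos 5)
Your reduction of both parts to ``produce a rational curve on $\cal{E}_f$ dominating the $t$-line with $y\not\equiv 0$, then invoke the torsion classification'' is exactly the paper's framing, and the observation that $y\neq 0$ forces the section to be non-torsion is correct. But the core of the theorem is the construction itself, and that is where your proposal stops being a proof. Your ansatz $y=sx$ reduces part (1) to making $s^4-4f(\phi(s))$ a square in $\Q(s)$; with $\phi$ linear and $\deg f=3$ this is a monic quartic in $s$, the $s^2$-coefficient determines $e$ rationally, and you are left with \emph{two} further equations ($c_3e=c_1$ and $e^2=c_0$) in the two parameters of $\phi$. Nothing guarantees this system has a rational solution -- you acknowledge this and then defer the fix (``take $\phi$ of slightly higher degree'' or ``exhibit that point explicitly'') without carrying it out. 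That deferred step is the entire content of part (1). The paper sidesteps the obstruction altogether: it passes to the cubic model $XY^2=X^2+f(t)$, substitutes $X=pT+q$, $Y=rT+s$, $t=T$, and kills the $T^3$ and $T^2$ coefficients by solving a system that is \emph{always} rationally solvable for $p,q$ in terms of the free parameters $r,s$ (equation (2.1)); the leftover linear equation in $T$ then hands you the base change $\phi$ (of degree $3$ in $s$, not linear) for free. No rational point on an auxiliary conic ever needs to be found.

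Part (2) has the same defect in sharper form. You correctly identify that the hypothesis supplies a seed point $(x_0,y_0)$ with $y_0\neq 0$ and that it should ``trivialise the genus-one obstruction,'' but the sentence ``lets the parametrisation go through'' is doing all the work and is not an argument. The paper's actual construction takes $X=pT^2+qT+x_0$, $Y=rT+y_0/x_0$, $t=T+t_0$ on the cubic model, so the constant term vanishes automatically, and then solves the triangular system $a_1=a_2=0$ for $p,q$ -- which requires $2x_0^3-y_0^2\neq 0$ and $(p,q)\neq(0,0)$, degeneracies that are handled precisely by using the \emph{infinitude} of rational points on $E_{t_0}$ to choose a good seed. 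Your proposal never confronts these degeneracy conditions, and a single rational point (which is all your sketch actually uses) would not suffice to rule them out. In short: the strategy is sound and parallel in spirit to the paper's, but the explicit parametrisation that constitutes the proof is missing in both parts.
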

\begin{proof}
For the proof of our theorem it will be convenient to work with
the surface $\cal{E}'_{f}$ given by the equation
\begin{equation*}
\cal{E}'_{f}: XY^2=X^2+f(t),
\end{equation*}
which is birationally equivalent to $\cal{E}_{f}$ by the mapping
$(x,y,t)=(X,XY,t)$ with inverse $(X,Y,t)=(x,y/x,t)$. Let us denote
$F(X,\;Y,\;t):=XY^2-X^2-f(t)$.

\bigskip

Proof of part (1).

Let $f\in\Q[t]$ and $\op{deg}f\leq 3$. Without loss of generality
we can assume that $f(t)=at^3+bt^2+ct+d$ for some
$a,\;b,\;c,\;d\in\Z$ with $a\neq 0$ or $b\neq 0$. If $a=b=0$, then
$f$ has degree 1 and if we put $t=(s^4-d)/c$, the surface splits
over $\Q(s)$. Let us put $X=pT+q,\;Y=rT+s,\;t=T$. For $X,\;Y,\;t$
defined in this way we obtain
\begin{equation*}
F(X,\;Y,\;t)=a_{0}+a_{1}T+a_{2}T^2+a_{3}T^3,
\end{equation*}
where
\begin{align*} a_{0}=&-d-q^2+qs^2,\quad a_{1}=-c-2pq+2qrs+ps^2,\\
                     & a_{2}=-b-p^2+qr^2+2prs,\quad a_{3}=-a+pr^2.
\end{align*}
Let us note that the system of equations $a_{2}=a_{3}=0$ has
exactly one solution given by
\begin{equation}\label{R1}
p=\frac{a}{r^2},\quad q=\frac{a^2+br^4-2ar^3s}{r^6}.
\end{equation}
For $p,\;q$ given in this way the equation $F(pT+q,\;rT+s,\;T)=0$
has the root $T=-\phi_{1}(r,\;s)/\phi_{2}(r,\;s)$, where
\begin{align*}
&\phi_{1}(r,\;s)=a^4+2a^2br^4+b^2r^8+dr^{12}-4ar^3(a^2s+br^4)s+r^6(3a^2-br^{4})s^2+2ar^9s^3,\\
&\phi_{2}(r,\;s)=r^4(2a^3+2abr^4+cr^8-2r^3(3a^2+br^4)s+3ar^6s^2).
\end{align*}

We have obtained a two-parametric solution of the equation
defining the surface $\cal{E}'_{f}$. Let us define
$\phi(s):=-\phi_{1}(1,\;s)/\phi_{2}(1,\;s)$ and put $t=\phi(s)$.
We can see that we have a section $\sigma
=(p\phi(s)+q,\;(p\phi(s)+q)(\phi(s)+s))$ on the surface
$\cal{E}_{f\circ\phi}$. Because $(p\phi(s)+q)(\phi(s)+s)$ is a
nonzero rational function, the section $\sigma$ is not of order
two, which proves that it is a non-torsion section.

\bigskip

Proof of part (2).

Because $\op{deg}f=4$, we can assume without loss of generality
that $f(t)=at^4+bt^2+ct+d$ for certain $a,\;b,\;c,\;d\in\Z$ with
$a\neq 0$. From the assumption, there exists $t_{0}\in\Q$ such
that $(x_{0},\;y_{0},\;t_{0})$ is a rational point on
$\cal{E}_{f}$ and $x_{0}\neq 0$. Then the point
$(x_{0},\;y_{0}/x_{0},\;t_{0})$ is a rational point on the surface
$\cal{E}'_{f}$.

Let us now put $X=pT^2+qT+x_{0},\;Y=rT+y_{0}/x_{0},\;t=T+t_{0}$.
For $X,\;Y,\;t$ defined in this way we get
\begin{equation*}
F(X,\;Y,\;t)=(a_{1}T+a_{2}T^2+a_{3}T^3+a_{4}T^4)/x_{0}^2,
\end{equation*}
where
\begin{align*}
&a_{1}=x_{0}^2(c+2bt_{0}+4at_{0}^3-2ry_{0})+q(2x_{0}^3-y_{0}^2),\\
&a_{2}=x_{0}(bx_{0}+q^2x_{0}+6at_{0}^2x_{0}-r^2x_{0}^2-2qry_{0})+p(2x_{0}^3-y_{0}^2),\\
&a_{3}=x_{0}(2pqx_{0}-qr^2x_{0}+4at_{0}x_{0}-2pry_{0}),\\
&a_{4}=(a+p^2-pr^2)x_{0}^2.
\end{align*}
If now $2x_{0}^3-y_{0}^2\neq 0$, then the system of equations
$a_{1}=a_{2}=0$ is triangular with respect to $p,\;q$. Because the
curve  $E_{t_{0}}$ has infinitely many rational points, by fixed
$a,\;b,\;c,\;d$ we will choose $x_{0},\;y_{0}$ such that
$2x_{0}^3-y_{0}^2\neq 0$, and the system $a_{1}=a_{2}=0$ has a
solution fulfilling the condition $p\neq 0$ or $q\neq 0$.
Therefore, we obtain
\begin{equation}\label{R2}
q=-\frac{x_{0}^2(c+2bt_{0}+4at_{0}^3-2ry_{0})}{2x_{0}^3-y_{0}^2},\;
p=-\frac{x_{0}(bx_{0}+q^2x_{0}+6at_{0}^2x_{0}-r^2x_{0}^2-2qry_{0})}{2x_{0}^3-y_{0}^2}.
\end{equation}
For $p,\;q$ defined in this way, the equation $F(pT^2+qT+x_{0},\;
rT+y_{0}/x_{0},\;T+t_{0})=0$ has the triple root $T=0$ and the
root
\begin{equation}\label{R3}
T=-\frac{2pqx_{0}-qr^2x_{0}+4at_{0}x_{0}-2pry_{0}}{(a+p^2-pr^2)x_{0}}=:\psi(r)-t_{0}.
\end{equation}
If we now put $t=\psi(r)$, then on the surface
$\cal{E}_{f\circ\psi}$ there is a section
$\sigma=(pT^2+qT+x_{0},\;(rT+y_{0}/x_{0})(pT^2+qT+x_{0}),\;r)$,
where $p,\;q$ are given by (\ref{R2}) and $T$ is given by
(\ref{R3}). Since $(rT+y_{0}/x_{0})(pT^2+qT+x_{0})\neq 0$, the
section $\sigma$ is not of order two, which proves that it is a
non-torsion section.
\end{proof}

Here a natural and nontrivial question arises concerning the
construction of polynomials $f$ of degree four for which there is
a rational point with $y\neq 0$ on the surface $\cal{E}_{f}$. It
turns out that there exists a wide class of polynomials with this
property.

Now we will show the following

\begin{thm}\label{thm2}
If $f\in\Q[t]$, $\op{deg}f=4$ and $f(t)\neq f(-t)$, then there
exists a rational base change $t= \varphi(u)$ such that on the
surface $\mathcal{E}_{f\circ\varphi}$ we have a non-torsion
section.
\end{thm}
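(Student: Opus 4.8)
The plan is to deduce the theorem from part (2) of Theorem~\ref{thm1}: it suffices to produce a single $t_0\in\Q$ for which $E_{t_0}$ has infinitely many rational points, and for this it is enough to exhibit one rational point $(x_0,y_0,t_0)$ on $\cal{E}_f$ with $x_0\neq 0$ and $y_0\neq 0$. Indeed, by the description of the torsion of $E_{t_0}(\Q)$ recalled before Theorem~\ref{thm1}, every torsion point has $y=0$ unless $f(t_0)=4$, so a point with $y_0\neq 0$ and $f(t_0)\neq 4$ is automatically non-torsion; it then forces $E_{t_0}$ to have positive rank, and part (2) of Theorem~\ref{thm1} supplies the base change $t=\phi(u)$ together with a non-torsion section. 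Thus the entire problem reduces to finding one good seed point, and this is where the hypothesis $f(t)\neq f(-t)$ must be used.

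To find the seed I would write $f(t)=at^4+bt^3+ct^2+dt+e$ with $a\neq 0$ and slice the surface by fixing $x=x_0$, where $x_0$ is chosen so that $ax_0$ is a perfect square; a convenient choice is $x_0=k^2/a$ with a free parameter $k\neq 0$. On this slice $\cal{E}_f$ becomes the genus-one curve
\begin{equation*}
C_{x_0}:\quad y^2=x_0f(t)+x_0^3=(ax_0)t^4+(bx_0)t^3+(cx_0)t^2+(dx_0)t+x_0(e+x_0^2),
\end{equation*}
whose leading coefficient $ax_0=k^2$ is a square. I would then complete the square: writing $y=\sqrt{ax_0}\,t^2+\beta t+\gamma$ and matching the coefficients of $t^4,t^3,t^2$ determines $\beta$ and $\gamma$ uniquely, after which the defect $y^2-(x_0f(t)+x_0^3)$ collapses to a polynomial $L(t)$ of degree at most one. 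Solving $L(t)=0$ yields an explicit rational $t_0$, and $\bigl(x_0,\ \sqrt{ax_0}\,t_0^2+\beta t_0+\gamma,\ t_0\bigr)$ is the desired rational point on $\cal{E}_f$.

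The crux is that this last step needs the coefficient of $t$ in $L(t)$ to be nonzero. A direct computation gives this coefficient as $\tfrac{k^2}{8a^3}\kappa$ with $\kappa:=b^3-4abc+8a^2d$, i.e. (up to scaling) the linear coefficient of the depressed quartic obtained from $f$ by the shift $t\mapsto t-b/(4a)$; thus $\kappa=0$ precisely when $f$ is symmetric about its center. When $b=0$ one has $\kappa=8a^2d$, so the hypothesis $f(t)\neq f(-t)$ (which then reads $d\neq 0$) makes $\kappa\neq 0$ and the construction succeeds, as it does whenever $\kappa\neq 0$. I expect the main obstacle to be exactly the degenerate sub-case $b\neq 0$, $\kappa=0$, where $f$ is symmetric about a center different from the origin yet still not even about the origin: there $L$ has zero slope and the slice construction breaks down.

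This remaining case I would treat by running the identical completing-the-square argument on the reciprocal surface $y^2=x^3+f^*(\tau)x$, where $f^*(\tau)=e\tau^4+d\tau^3+c\tau^2+b\tau+a$ and $\cal{E}_f$ is isomorphic to it via $t=1/\tau$, $x\mapsto x/\tau^2$, $y\mapsto y/\tau^3$; the analogous invariant is $\kappa^*=d^3-4cde+8be^2$, which is in general nonzero precisely when the first one fails, so that the two constructions together cover the non-even situation, the finitely thin residual locus being handled by ad hoc points. Finally, since $x_0=k^2/a$ carries a free parameter, the root $t_0=t_0(k)$ varies with $k$, so I can discard the finitely many values of $k$ giving $y_0=0$ or $f(t_0)=4$; this guarantees that the resulting seed is genuinely non-torsion, after which part (2) of Theorem~\ref{thm1} completes the proof.
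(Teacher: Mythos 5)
Your core construction is the same as the paper's: slice by an $x_0$ for which $ax_0$ is a square, so that the quartic $y^2=x_0f(t)+x_0^3$ has square leading coefficient, and complete the square to leave a linear remainder $L(t)$. The paper simply runs this over a function field: it first depresses the quartic, sets $x=au^2$, finds the completing-the-square data over $\Q(u)$, and the root of $L$ is already the base change $t=\varphi(u)$ carrying a visibly non-torsion section. Since your $x_0=k^2/a$ contains the free parameter $k$, you could do exactly the same over $\Q(k)$ and read off the section directly; the detour through Theorem \ref{thm1}(2) (seed point, hence positive rank, hence bootstrap) is legitimate but unnecessary, and it obliges you to police side conditions ($y_0\neq 0$, $f(t_0)\neq 4$, $f(t_0)\neq 0$, $2x_0^3-y_0^2\neq 0$) that the direct version makes irrelevant. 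Your identification of the obstruction $\kappa=b^3-4abc+8a^2d$ is correct and is exactly the paper's condition ``$c\neq0$'' for the depressed form, i.e.\ both methods fail precisely when $f$ is symmetric about its center $-b/(4a)$.

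The genuine gap is your treatment of that residual case. First, the reciprocal trick does not shrink it to a ``finitely thin'' set: for $f(t)=(t-1)^4+(t-1)^2+1=t^4-4t^3+7t^2-6t+3$ one has $\kappa=-64+112-48=0$ and also $\kappa^*=d^3-4cde+8be^2=-216+504-288=0$, and such $f$ form a positive-dimensional family. Second, and decisively, no ``ad hoc points'' can finish this case with the tools available: $\kappa=0$ means $f(t)=g(t-t_1)$ with $g$ even and $t_1=-b/(4a)$, so composing base changes with the shift shows that the conclusion of the theorem for such $f$ is \emph{equivalent} to the conclusion for an even quartic --- exactly the situation the paper cannot resolve and leaves as Conjecture \ref{conj1}. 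So your argument, like the paper's, really establishes the theorem only under the stronger hypothesis that $f$ is not symmetric about \emph{any} point; the paper buries this in its ``without loss of generality $\ldots$ $ac\neq0$,'' whereas you name the bad locus correctly but then claim, wrongly, that it can be dispatched. You should either assume non-symmetry outright or state plainly that the symmetric-but-not-even case reduces to the open even case.
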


\begin{proof}
Without loss of generality we can assume that
$f(t)=at^4+bt^2+ct+d$ for certain $a,\;b,\;c,\;d\in\Z$, where
$ac\neq 0$. Let $u$ be a variable and let us put $x=au^2$ and
treat our surface as a curve of degree 4 defined over $\Q(u)$,
i.e. we consider the curve
\begin{equation*}
C_{1}:\quad  y^2=a^2u^2t^4+abu^2t^2+acu^2t+adu^2+a^3u^6=:h_{1}(t).
\end{equation*}
Let us note that the point at infinity on the curve $C_{1}$ is
rational. Let us now put $t=T,\;y=auT^2+pT+q$. Then
\begin{equation*}
(auT^2+pT+q)^2-h_{1}(T)=a_{0}+a_{1}T+a_{2}T^2+a_{3}T^3,
\end{equation*}
where
\begin{align*}
&a_{0}=-q^2+adu^2+a^3u^6,\quad a_{1}=-2pq+acu^2,\\
&a_{2}=-p^2-2aqu+abu^2,\quad a_{3}=-2apu.
\end{align*}
The system of equations $a_{2}=a_{3}=0$ has a solution given by
$p=0,\;q=bu/2$. For $p,\;q$ defined in this way, the equation
$(auT^2+pT+q)^2-h_{1}(T)=0$ has the root
\begin{equation*}
T=-\frac{-b^2+4ad+4a^3u^4}{4ac}=:\varphi(u).
\end{equation*}
We have shown that with the assumption $ac\neq 0$ on the surface
$\cal{E}_{f\circ\varphi}$ there is a section
\begin{equation*}
\sigma_{1}
=\Big(au^2,\;\frac{(-b^4-8abc^2+8ab^2d-16a^2d^2)u+8a^3(b^2-4ad)u^5-16a^6u^9}{16ac^2}\Big),
\end{equation*}
which is clearly non-torsion.
\end{proof}

From the above theorem we obtain two interesting corollaries

\begin{cor}\label{cor3}
If $f(t)=at^4+bt^2+ct+d\in\Z[t]$, $a,\;c\in\{-1,\;1\}$ and
$b\equiv 0 \pmod 2$, then the diophantine equation $y^2=x^3+f(t)x$
has infinitely many solutions in integers.
\end{cor}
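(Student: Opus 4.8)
The plan is to read this off Theorem \ref{thm2} and then do a purely arithmetic integrality check. When $a,c\in\{-1,1\}$ we have $ac\neq 0$, so the hypothesis of Theorem \ref{thm2} is met, and since $f(t)-f(-t)=2ct\not\equiv 0$ the polynomial $f$ is not even. Theorem \ref{thm2} therefore furnishes, for every $u$, a point on $\cal{E}_{f}$ with
\begin{equation*}
x=au^2,\qquad t=\varphi(u)=-\frac{-b^2+4ad+4a^3u^4}{4ac},
\end{equation*}
and with $y$ given by the rational function displayed in $\sigma_{1}$, whose denominator is $16ac^2$. Thus the entire problem reduces to showing that, under the extra hypotheses $a,c\in\{-1,1\}$ and $b$ even, all three coordinates are integers whenever $u\in\Z$; the resulting points are then genuine integral solutions of $y^2=x^3+f(t)x$.

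First I would substitute $a,c\in\{-1,1\}$, so that $c^2=1$, $a^2=1$, $a^3=a$ and $a^6=1$. This collapses the denominators to $4ac=\pm 4$ for $t$ and to $16ac^2=16a=\pm 16$ for $y$. Integrality of $t$ then amounts to the divisibility $4\mid b^2-4ad-4a^3u^4$, and integrality of $y$ amounts to $16$ dividing the numerator of the $y$-coordinate. The check for $t$ is immediate: writing $b=2b'$ gives $b^2\equiv 0\pmod 4$, while the remaining two terms are manifestly multiples of $4$, so $\varphi(u)\in\Z$ for all $u\in\Z$.

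The core of the argument is the modulo-$16$ bookkeeping for the numerator of $y$, and this is where the hypothesis that $b$ is even does all the work; I expect this to be the only real obstacle. With $b=2b'$ one checks each coefficient in turn: in the coefficient of $u$ the terms $b^4=16b'^4$, $8ab=16ab'$, $8ab^2d=32ab'^2d$ and $16a^2d^2$ are all multiples of $16$; the coefficient of $u^5$ equals $8a(b^2-4ad)=32a(b'^2-ad)$, again a multiple of $16$; and the coefficient of $u^9$ is $-16a^6=-16$. Hence the numerator is $16$ times an integer polynomial in $u$, and dividing by $16a=\pm 16$ leaves an integer, so $y\in\Z$ for every $u\in\Z$. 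Finally, since $x=au^2$ takes infinitely many distinct values as $u$ ranges over the positive integers, the points $(au^2,\,y,\,\varphi(u))$ give infinitely many distinct integer solutions of $y^2=x^3+f(t)x$, which completes the proof.
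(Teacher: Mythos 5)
Your proposal is correct and follows exactly the route the paper intends: Corollary \ref{cor3} is stated as an immediate consequence of Theorem \ref{thm2}, obtained by specializing the explicit section $\sigma_{1}$ and noting that the denominators $4ac$ and $16ac^{2}$ divide the corresponding numerators when $a,c\in\{-1,1\}$ and $b$ is even. Your modulo-$4$ and modulo-$16$ bookkeeping just makes explicit the integrality check the paper leaves to the reader, and the distinctness argument via $x=au^{2}$ is the right way to get infinitude.
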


\begin{cor}\label{cor4}
If $f\in\Q[t]$, $\op{deg}f=4$, $f$ is not an even polynomial and
$f$ has at least two complex roots, then the diophantine equation
$v^2=u^4+f(w)$ has infinitely many rational parametric solutions.
\end{cor}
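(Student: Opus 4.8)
The plan is to recognize $v^2=u^4+f(w)$ as an elliptic surface that is $\Q(w)$-isomorphic to one of the surfaces already treated, namely $\cal{E}_{-4f}$, and then to invoke Theorem \ref{thm2}. Viewing the equation over the function field $\Q(w)$ as a curve in the variables $(u,v)$, its leading coefficient in $u$ equals $1$; hence the points at infinity are rational and the curve is an elliptic curve over $\Q(w)$ (equivalently, it coincides with its own Jacobian). I would make this explicit through the birational change of variables $(u,v)\mapsto(x,y)=\big(2(u^2-v),\,4u(v-u^2)\big)$, with inverse $(x,y)\mapsto(u,v)=\big(-y/(2x),\,y^2/(4x^2)-x/2\big)$. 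A direct substitution, using $f(w)=v^2-u^4$, shows that this map carries $v^2=u^4+f(w)$ onto $y^2=x^3-4f(w)x$; since both the map and its inverse have coefficients in $\Q$, they give a $\Q(w)$-isomorphism between $v^2=u^4+f(w)$ and the surface $\cal{E}_{-4f}$.

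Next I would apply Theorem \ref{thm2} with $g:=-4f$ in place of $f$. Here $\op{deg}g=4$, the polynomial $g$ has the same complex roots as $f$ (so $\cal{E}_{g}$ is non-split), and $g$ is not even because $g(-t)=-4f(-t)$ and $f$ is not even by hypothesis. Theorem \ref{thm2} therefore yields a rational base change $w=\varphi(r)$ and a non-torsion section $(x(r),y(r))$ on the surface $\cal{E}_{g\circ\varphi}:\;y^2=x^3-4f(\varphi(r))x$, with $y(r)\neq 0$.

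Finally I would transport this section back through the inverse map, with $C=f(\varphi(r))$, setting $u(r)=-y(r)/(2x(r))$ and $v(r)=u(r)^2-x(r)/2$; the triple $(u(r),v(r),\varphi(r))$ then satisfies $v^2=u^4+f(w)$. Because $(x(r),y(r))$ is non-torsion, $x(r)$ is a non-constant rational function and in particular is not identically zero, so the inverse map is defined and $(u(r),v(r))$ is a genuinely non-constant parametrization; letting $r$ range over $\Q$ produces infinitely many rational solutions. The step requiring the most care is the first one: verifying that the displayed substitutions really give a $\Q(w)$-isomorphism onto $\cal{E}_{-4f}$ (this is exactly where the rational point at infinity is used, so that the curve equals its Jacobian rather than merely being isogenous to it) and that such an isomorphism carries non-torsion sections to non-torsion sections. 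Once this is in place, the remaining work is the routine substitution already packaged inside Theorem \ref{thm2}.
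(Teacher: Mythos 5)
Your proposal is correct and follows essentially the same route as the paper: both pass to the birationally equivalent surface $y^2=x^3-4f(t)x$ (the paper cites Mordell's method, you write out the explicit quadratic-to-Weierstrass map, which checks out) and then invoke Theorem \ref{thm2}. The only cosmetic difference is a sign convention in the change of variables; your version verifies directly, and the multiples of the resulting non-torsion section supply the infinitely many parametric solutions.
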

\begin{proof}
Let us denote $S: v^2=u^4+f(w)$. Using the method described in
(\cite{Mor}, page 77) we obtain that $S$ is birationally
equivalent with the surface
\begin{equation*}
\cal{E}:y^2=x^3-4f(t)x.
\end{equation*}
The mapping from $\mathcal{S}$ to $\mathcal{E}$ is given by
\begin{equation*}
(u,\;v,\;w)=\Big(\frac{y}{2x},\;\frac{2x^3+y^2}{4x^2},\;t\Big),
\end{equation*}
while the inverse mapping is of the form
\begin{equation*}
(x,\;y,\;t)=(-2(u^2-v),\;-4u(u^2-v),\;w).
\end{equation*}
Applying now Theorem \ref{thm2} we obtain the statement of our
corollary.
\end{proof}

An interesting question arises here concerning the existence of a
non-trivial rational point on the surface $\cal{E}_{f}$ in case
when $f(t)=at^4+bt^2+d$ for some $a,\;b,\;d\in\Z.$ It is worth
noting that if the equation $f(t)=0$ has a rational root $t_{0}$,
then on the surface $\cal{E}_{f}$ we have a rational curve
$(x,\;y,\;t)=(u^2,\;u^3,\;t_{0})$ and we can use the second part
of Theorem \ref{thm1} to construct other rational curves on
$\cal{E}_{f}$. Without any difficulty we can give other infinite
families of polynomials fulfilling the conditions of the second
part of Theorem \ref{thm1}. For instance, if
$f(t)=at^4+bt^2+u(v^2-u)$ , then on the curve $E_{0}:
y^2=x^3+u(v^2-u)x$ there is a point $(u,\;uv)$ which is not of
finite order if $uv\neq 0$.

One can check with computer that if
$\op{max}\{|a|,\;|b|,\;|d|\}\leq 100$, then there exists $t\in\Q$
such that infinitely many rational points lie on the curve $E_{t}:
y^2=x^3+f(t)x$. This leads us to the following

\begin{conj}\label{conj1}
Let $a,\;b,\;d\in\Z$ and $f(t)=at^4+bt^2+d$. Then there exists
$t_{0}\in\Q$ such that there are infinitely many rational points
on the curve $E_{t_{0}}$.
\end{conj}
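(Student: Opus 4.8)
The plan is to reduce the conjecture to the existence of a single well-chosen rational point, and then to exploit the even structure of $f$ to bring the (already proved) part (1) of Theorem \ref{thm1} into play. First I would observe that it suffices to exhibit one rational point $(x_{0},y_{0},t_{0})$ on $\cal{E}_{f}$ with $y_{0}\neq 0$: by the torsion classification recalled above, such a point is non-torsion on the fibre $E_{t_{0}}$ unless $f(t_{0})=4$ and $(x_{0},y_{0})=(2,\pm 4)$, and these exceptional cases concern only finitely many $t_{0}$. A single non-torsion point generates an infinite subgroup of $E_{t_{0}}(\Q)$, so it already produces infinitely many rational points on $E_{t_{0}}$. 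Thus the conjecture is equivalent to the assertion that $\cal{E}_{f}(\Q)$ is not contained in the locus $y=0$ (up to this harmless genericity caveat).

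The key structural remark is that an even $f$ factors through squaring: writing $g(w)=aw^2+bw+d$ we have $f(t)=g(t^2)$, so the fibre $E_{t_{0}}$ of $\cal{E}_{f}$ is \emph{literally} the fibre of the degree-two surface $\cal{E}_{g}:y^2=x^3+g(w)x$ at the value $w_{0}=t_{0}^2$. Since $\op{deg}g=2\leq 3$, part (1) of Theorem \ref{thm1} applies to $\cal{E}_{g}$: there is a rational base change $w=\phi(s)$ and a non-torsion section on $\cal{E}_{g\circ\phi}$. By Silverman's specialization theorem, for all but finitely many $s\in\Q$ the fibre $y^2=x^3+g(\phi(s))x$ has positive rank. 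Hence I obtain an \emph{infinite} supply of rational values $w_{0}=\phi(s)$ for which the corresponding curve $y^2=x^3+g(w_{0})x$ has infinitely many rational points.

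To finish the conjecture it would then suffice that at least one such $w_{0}$ be the square of a rational number, say $w_{0}=t_{0}^2$: combined with $f(t_{0})=g(t_{0}^2)=g(w_{0})$, this forces $E_{t_{0}}$ to have positive rank. So the plan reduces to a single explicit Diophantine problem: decide whether the curve $\{(s,r):r^2=\phi(s)\}$ has a rational point with $s$ avoiding the finitely many bad specializations. One would compute the genus of this curve from the explicit $\phi$ and search for points, settling at once the sub-families already treated in the discussion preceding the conjecture (for instance when $f$ has a rational root, where $(u^2,u^3,t_{0})$ lies on $\cal{E}_{f}$ and Theorem \ref{thm1}(2) applies, or when $-d$ is a square).

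The hard part, and the reason this is only a conjecture, is that the positive-rank values $w_{0}$ we can actually produce form the thin set $\phi(\Q)$, and nothing in the construction forces this set to meet the rational squares; requiring it to do so is itself a Diophantine problem whose solvability cannot be guaranteed uniformly in $(a,b,d)$. More structurally, for even $f$ the Mordell--Weil rank of $\cal{E}_{f}$ over $\Q(t)$ is typically zero: since $B=0$ the surface is isotrivial with $j\equiv 1728$, it is the quartic-twist family of $y^2=x^3+x$, and one checks via Shioda--Tate that, with four distinct roots of $f$ giving fibres of Kodaira type III and good reduction at $t=\infty$, its geometric rank equals $8-4=4$; yet these four sections are permuted by the Galois action on the roots of $f$ and none is individually $\Q(t)$-rational, so no base change in $t$ can manufacture a section and one genuinely needs rank-jumping at an isolated fibre. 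Proving that positive rank occurs for at least one $t_{0}$ in a one-parameter family of quartic twists of generically vanishing rank is exactly the kind of rank-variation statement for which no general technique is available; it is morally as hard as the congruent-number rank problem, and the computation for $\op{max}\{|a|,|b|,|d|\}\leq 100$ is the available evidence rather than a method of proof.
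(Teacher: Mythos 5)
The statement you were given is Conjecture \ref{conj1}: the paper contains no proof of it, only computational evidence for $\op{max}\{|a|,\;|b|,\;|d|\}\leq 100$ and the special sub-families discussed just before it (e.g.\ $f$ with a rational root, or $f(t)=at^4+bt^2+u(v^2-u)$, where Theorem \ref{thm1}(2) then applies). Your proposal likewise does not prove the statement, and you say so explicitly; judged as an analysis of the problem it is essentially sound and, in fact, more informative than the paper's own discussion. The reduction to producing a single point with $y_{0}\neq 0$ (away from the finitely many $t_{0}$ with $f(t_{0})=4$) is correct by the quoted torsion classification. The observation $f(t)=g(t^2)$ with $\op{deg}g=2$ does let Theorem \ref{thm1}(1) produce infinitely many $w_{0}=\phi(s)$ for which $y^2=x^3+g(w_{0})x$ has positive rank (one should add the hypothesis that $g$ has two distinct roots, so that $\cal{E}_{g}$ is non-split as required in Section 2), and you correctly identify that the whole difficulty is then concentrated in the auxiliary Diophantine problem $r^2=\phi(s)$, a curve of large genus for the explicit $\phi$ of the paper, on which Faltings guarantees nothing.

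Two cautions on the structural part. First, the claim that ``no base change in $t$ can manufacture a section'' is too strong as stated: pulling back along $t=\psi(r)$ produces a surface of larger Euler characteristic whose geometric Mordell--Weil rank can exceed that of $\cal{E}_{f}$, and genuinely new $\Q(r)$-rational sections can appear --- the paper's Theorem \ref{thm1}(2) constructs exactly such a section; what is true is that that construction needs a positive-rank fibre as input, which is circular for the conjecture. Second, the Shioda--Tate count (four fibres of type III, good reduction at $t=\infty$, geometric rank $8-4=4$) is a reasonable heuristic for generic vanishing of the rank over $\Q(t)$, but you have not verified that the rank-$4$ lattice carries no Galois-invariant section for every even $f$, and for special $(a,b,d)$ the conjecture is in fact easy. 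In short: your write-up correctly locates why this is a conjecture rather than a theorem, but, like the paper, it leaves the statement open.
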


\section{Rational points on $\cal{E}^{g}:y^2=x^3+g(t)$}

Let $g\in\Q[t]$ be a monic polynomial of degree 6 and let
$g(t)\neq t^6$. For such $g$ let us consider the surface
\begin{equation*}
\cal{E}^{g}: y^2=x^3+g(t).
\end{equation*}
For a given $t\in\Q$ let us denote the curve $y^2=x^3+g(t)$ by
$E^{t}$. Let us recall how a torsion part of the curve $E^{t}$
looks like with a fixed $t\in\mathbb{Q}$  (\cite{Sil}, page 323).
If $g(t)=1$, then $\op{Tors} E^{t}\cong\mathbb{Z}/6\mathbb{Z}$. If
$g(t)\neq 1$ and $g(t)$ is a square in $\Q$, then $\op{Tors}
E^{t}=\{\mathcal{O},\;(0,\;\sqrt{g(t)}),\;(0,\;-\sqrt{g(t)})\}$.
In case when $g(t)=-432$ we have
$\op{Tors}E^{t}=\{\mathcal{O},\;(12,\;36),\;(12,\;-36)\}$. If
$g(t)\neq 1$ and $g(t)$ is a cube in $\Q$, then $\op{Tors}
E^{t}=\{\mathcal{O},\;(-\sqrt[3]{g(t)},\;0)\}$. In the remaining
cases we have $\op{Tors} E^{t}=\{\mathcal{O}\}$. As an immediate
consequence we obtain that if there is a rational base change
$t\mapsto \beta(t)$ such that on the curve $\cal{E}^{g\circ
\beta}$ we have the section $\sigma=(x,\;y)$ with $xy\neq 0$, then
$\sigma$ is a non-torsion section.

We show the following

\begin{thm}\label{thm5}
Let $g\in\Q[t]$ be a monic polynomial of degree six. If $g$ is not
an even polynomial, then there exists a rational base change
$t=\chi(u)$ such that there is a non-torsion section on the curve
$\cal{E}^{g\circ\chi}$.
\end{thm}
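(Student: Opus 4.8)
The plan is to imitate the degree-matching strategy of Theorems \ref{thm1} and \ref{thm2}: I would parametrize $x$ and $y$ by low-degree polynomials in $t$, substitute into the defining equation of $\mathcal{E}^g$, and choose the coefficients so that the resulting polynomial in $t$ collapses to a single linear factor whose root supplies the base change $\chi$. Write $g(t)=t^6+a_5t^5+a_4t^4+a_3t^3+a_2t^2+a_1t+a_0$; the hypothesis that $g$ is not even means precisely that $(a_5,a_3,a_1)\neq(0,0,0)$.

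First I would set $x=-t^2+pt+q$ and $y=\mu t^2+\nu t+\rho$, treating $\mu$ as the free parameter of the sought rational curve. Because the leading coefficient of $x^3$ is $-1$, the $t^6$ term of $y^2-x^3-g(t)$ cancels automatically, and choosing $p=-a_5/3$ kills the $t^5$ term as well, so that $y^2-x^3-g(t)$ has degree at most $4$ in $t$. The coefficients of $t^4,t^3,t^2$ then form a triangular system which I would solve for $q,\nu,\rho$ as rational functions of $\mu$ (dividing by $\mu$): the $t^4$-equation gives $q$, the $t^3$-equation gives $\nu$, and the $t^2$-equation gives $\rho$. What survives is a linear polynomial $a_1't+a_0'$, whose coefficients are explicit rational functions of $\mu$, and I would define the base change by $t=\chi(\mu):=-a_0'/a_1'$, its unique root. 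By construction, $(x,y)$ evaluated at $t=\chi(\mu)$ is then a section of $\mathcal{E}^{g\circ\chi}$, since $y^2-x^3-g(t)=a_1'(t-\chi(\mu))$ vanishes there.

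The step where the hypothesis is essential, and which I expect to be the main obstacle, is verifying that $a_1'\not\equiv0$ as a function of $\mu$; otherwise $\chi$ is not defined, or fails to be a genuine non-constant base change. Here $a_1'=2\nu\rho-3pq^2-a_1$. If $g$ is even, then $a_5=a_3=a_1=0$, which forces $p=0$, $\nu\equiv0$, and hence $a_1'\equiv0$: this is exactly the degenerate situation the hypothesis excludes. When $g$ is not even I would rule out $a_1'\equiv0$ by inspecting the Laurent expansion of $a_1'$ in $\mu$. The top-degree terms of $2\nu\rho$ and $3pq^2$ both equal $\tfrac{p}{3}\mu^4$ and cancel, but each nonzero odd coefficient leaves a surviving term: a nonzero $a_5$ enters through $p$, a nonzero $a_3$ enters through the polar part of $\nu$ (which produces a nonzero $\mu^{-4}$ contribution to $2\nu\rho$ that $3pq^2$ cannot cancel), and if $a_5=a_3=0$ but $a_1\neq0$ then $p=\nu=0$ and simply $a_1'=-a_1\neq0$. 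A short case analysis on which of $a_5,a_3,a_1$ is the first to be nonzero makes this rigorous, and it is the only place where real care is needed.

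Finally I would confirm that the section $\sigma$ is non-torsion. By the torsion description recorded just before the theorem, it suffices to check that neither coordinate of $\sigma$ vanishes identically. Once $a_1'\not\equiv0$ is established, $\chi$ is a non-constant rational function of $\mu$ (in the case $a_1'=-a_1$ one checks instead that $a_0'=\rho^2-q^3-a_0$ is non-constant), and then both $x=-\chi^2+p\chi+q$ and $y=\mu\chi^2+\nu\chi+\rho$ are non-constant rational functions of $\mu$; a non-constant rational function is certainly not identically zero, so $xy\neq0$. Hence $\sigma$ is a non-torsion section on $\mathcal{E}^{g\circ\chi}$, as required.
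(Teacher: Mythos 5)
Your construction is essentially the paper's: both pass to a quartic model in $t$ over $\Q(\mu)$ whose leading coefficient is a square, use the rational point at infinity, kill the coefficients of $t^4$, $t^3$, $t^2$, and read $\chi$ off the surviving linear equation. (The paper first translates $t$ to remove the $t^5$ term and then sets $x=\frac{u^2-a}{3}-t^2$; your linear term $pt$ with $p=-a_5/3$ is that translation in disguise.) The computation up to $y^2-x^3-g(t)=a_1't+a_0'$ is correct, as is your treatment of the sub-cases $a_5=0$, $a_3\neq 0$ and $a_5=a_3=0$, $a_1\neq 0$.

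The gap is in the case $a_5\neq 0$ of your verification that $a_1'\not\equiv 0$. Put $P=-a_5/3$, write $q=\frac{\mu^2}{3}+c_0$ with $c_0=P^2-\frac{a_4}{3}$, and set $A=a_3+P^3-6Pc_0=a_3-5P^3+2Pa_4$. Then $a_1'$ is a Laurent polynomial in $\mu$ supported in degrees $2,0,-2,-4$, whose coefficients in degrees $2$, $-2$, $-4$ are $-\frac{A}{6}$, $\frac{A(PA+2B)}{4}$, $-\frac{A^3}{8}$ respectively (for a suitable constant $B$). So if $A=0$, everything except the constant term vanishes, and that constant, $-Pa_2-3P^3c_0-a_1$, can vanish as well; both conditions are satisfiable with $a_5\neq 0$, and together they say exactly that $g$ is symmetric about $t=-\frac{a_5}{6}$, i.e.\ a translate of an even polynomial. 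Concretely, for
\begin{equation*}
g(t)=t^6-3t^5+5t^3-3t=\Big(t-\frac{1}{2}\Big)^6-\frac{15}{4}\Big(t-\frac{1}{2}\Big)^4+\frac{75}{16}\Big(t-\frac{1}{2}\Big)^2-\frac{61}{64}
\end{equation*}
one has $P=1$, $c_0=1$, $A=5-5=0$ and $-Pa_2-3P^3c_0-a_1=-3+3=0$, so $a_1'\equiv 0$ and your $\chi$ is undefined, even though $g$ is not even. Thus the assertion that ``a nonzero $a_5$ enters through $p$'' does not hold: the surviving terms you rely on are all proportional to powers of $A$, not of $a_5$. To be fair, the paper's proof buries the identical difficulty in its opening ``without loss of generality'' (translating away the $t^5$ coefficient can turn a non-even $g$ into an even one, after which the stated condition $b\neq 0$ or $d\neq 0$ fails), so your argument is exactly as strong as the paper's — it proves the claim for all $g$ that are not translates of even polynomials — but the explicit case analysis you give for $a_5\neq 0$ is incorrect as written, and no choice of $\mu$ can repair it for the symmetric $g$ above.
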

\begin{proof}
Without loss of generality we can assume that
$g(t)=t^6+at^4+bt^3+ct^2+dt+e$ for certain $a,b,c,d,e\in\Z$ with
$b\neq 0$ or $d\neq 0$. Let now $C_{2}$ denote a curve defined
over the field $\Q(t)$ obtained from $\cal{E}^{g}$ after
substituting $x=\frac{u^2-a}{3}-t^2$. We consider the curve of the
form
\begin{align*}
C_{2}:
y^2=u^2t^4+bt^3&-\frac{a^2-3c-2au^2+u^4}{3}t^2+dt\\
               &+\frac{-a^3+27e+3a^2u^2-3au^4+u^6}{27}=:h_{2}(t).
\end{align*}
Note that the point at infinity on the curve $C_{2}$ is rational.
Let us put $t=T,\;y=uT^2+pT+q$. Then
\begin{equation*}
(uT^2+pT+q)^2-h_{2}(T)=a_{0}+a_{1}T+a_{2}T^2+a_{3}T^3,
\end{equation*}
where
\begin{align*}
&a_{0}=\frac{a^3-27e+27q^2-3a^2u^2+3au^4-u^6}{27},\quad
a_{1}=-d+2pq,\\
\quad &a_{2}=\frac{a^2-3c+3p^2+6qu-2au^2+u^4}{3},\quad a_{3}=
-b+2pu.
\end{align*}
Solving the system of equations $a_{2}=a_{3}=0$ with respect to
$p,\;q$ we obtain
\begin{equation}\label{R4}
p=\frac{b}{2u},\;q=\frac{-3b^2-4a^2u^2+12cu^2+8au^4-4u^6}{24u^3}.
\end{equation}
 Now, if $p,\;q$ are given by (\ref{R4}), then the equation $(uT^2+pT+q)^2-h_{2}(T)=0$
 has a root $T=-\chi_{1}(u)/\chi_{2}(u)=:\chi(u)$, where
 \begin{align*}
\chi_{1}(u)=-27b^4&-72b^2(a^2-3c)u^2-48(a^4-3ab^2-6a^2c+9c^2)u^4\\
                  &+8(16a^3-9b^2-72ac+216e)u^6-96(a^2-3c)u^8+16u^{12},
\end{align*}
\begin{equation*}
\hskip -2.4cm
\chi_{2}(u)=72u^2(3b^3+4b(a^2-3c)u^2-8(ab-3d)u^4+4bu^6).
\end{equation*}
Our computations imply that on the curve $\cal{E}^{g\circ\chi}$ we
have the section $\sigma_{2} =((u^2-a-3T^2)/3,\;uT^2+pT+q)$, where
$p,\;q$ are given by (\ref{R4}) and $T=\chi(u)$. It is easy to see
that the section $\sigma_{2}$ is non-torsion.

It is also worth noting that the assumption $b\neq 0$ or $d\neq 0$
is essential for the employed method because in the opposite case
the function $\chi_{2}$ is identically equal to zero.
\end{proof}

Here a natural question arises whether the assumption that for a
certain $t_{0}\in\Q$ infinitely many rational points lie on the
curve $E^{t_{0}}$ enables to construct a rational curve on the
surface $\cal{E}^{g}$. Unfortunately, we are not able to show such
a theorem with any even polynomial $g$. However, we can prove the
following

\bigskip

\begin{thm}\label{thm6}
Let $g\in\Q[t]$ be a monic and even polynomial of degree six. If
there exists $t_{0}\in\Q$ such that there are infinitely many
rational points on the curve $E^{t_{0}}$, then the set of $t\in\Q$
such that $E^{t}$ has positive rank is infinite.
\end{thm}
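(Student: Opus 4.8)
The plan is to exploit the evenness of $g$ by re-fibering the surface. Write $g(t)=t^{6}+at^{4}+ct^{2}+e$ with $a,c,e\in\Z$, and translate the $x$-coordinate by setting $x=v-t^{2}$. Since $(v-t^{2})^{3}+g(t)$ involves only even powers of $t$, the defining equation of $\cal{E}^{g}$ becomes
\begin{equation*}
y^{2}=(3v+a)t^{4}+(c-3v^{2})t^{2}+(v^{3}+e)=:Q_{v}(t).
\end{equation*}
Reading $v$ as the base parameter, this exhibits $\cal{E}^{g}$ as a second elliptic surface $\cal{G}$, whose fibre $\cal{G}_{v}$ over $v\in\Q$ is the genus-one curve $y^{2}=Q_{v}(t)$ in the variables $(t,y)$. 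This fibration is the substitute for the construction of Theorem \ref{thm5}, which degenerated precisely because $g$ is even.

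The crucial point is the following dictionary: a rational point $(t_{1},y_{1})$ on a fibre $\cal{G}_{v}$ (with $v\in\Q$) yields the rational point $(v-t_{1}^{2},y_{1})$ on $E^{t_{1}}$, because $Q_{v}(t_{1})=(v-t_{1}^{2})^{3}+g(t_{1})$. By the torsion classification recalled above, for all but finitely many values of $t_{1}$ such a point is non-torsion as soon as $(v-t_{1}^{2})y_{1}\neq0$. Hence it suffices to produce a single rational $v$ for which $\cal{G}_{v}$ has a non-torsion rational point: then $\cal{G}_{v}$ carries infinitely many rational points, their $t$-coordinates form an infinite set of rationals, and all but finitely many of them give a curve $E^{t}$ of positive rank. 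The hypothesis is what feeds this fibration. If $(x_{i},y_{i})$ runs over the infinitely many rational points of $E^{t_{0}}$ and we set $v_{i}:=x_{i}+t_{0}^{2}$, then $Q_{v_{i}}(t_{0})=x_{i}^{3}+g(t_{0})=y_{i}^{2}$, so each fibre $\cal{G}_{v_{i}}$ already carries the rational point $(t_{0},y_{i})$. In geometric terms, the curve $\{t=t_{0}\}\cong E^{t_{0}}$ is a bisection of $\cal{G}$ with infinitely many rational points.

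The main obstacle is to upgrade one of these marked points $(t_{0},y_{i})$ to a non-torsion point of $\cal{G}_{v_{i}}$; being of infinite order on $E^{t_{0}}$ says nothing a priori about its order on the \emph{different} curve $\cal{G}_{v_{i}}$. I would resolve this by a rigidity argument. A direct computation of the classical $I,J$-invariants of $Q_{v}$ shows that $\cal{G}$ is non-isotrivial, its $j$-invariant being a non-constant function of $v$; consequently, by Mazur's theorem bounding rational torsion, there is an $N$ bounding the order of every rational torsion point in every fibre. For each $n\le N$ the locus of $v$ for which the marked point has order $n$ is cut out on the bisection $\{t=t_{0}\}\cong E^{t_{0}}$ by a non-trivial algebraic condition, hence meets $E^{t_{0}}$ in finitely many points. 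Since $E^{t_{0}}(\Q)$ is infinite, infinitely many of the $v_{i}$ escape all these loci, and for any such $v_{i}$ the point $(t_{0},y_{i})$ is non-torsion on $\cal{G}_{v_{i}}$. Applying the dictionary of the previous paragraph to this fibre then produces infinitely many $t\in\Q$ with $E^{t}$ of positive rank, as required. The delicate step is verifying that these torsion loci are genuinely proper subsets of the bisection, equivalently that $\{t=t_{0}\}$ is not itself a torsion multisection of $\cal{G}$; this is where I expect the real work to lie.
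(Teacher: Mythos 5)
Your re-fibering idea is attractive and genuinely different from the paper's argument, and the bookkeeping around it is correct: the identity $(v-t^{2})^{3}+g(t)=Q_{v}(t)$, the dictionary between rational points of $\cal{G}_{v}$ and rational points of $E^{t}$, and the reduction to producing a single $v\in\Q$ with $\cal{G}_{v}(\Q)$ infinite are all sound (the endgame discards the finitely many $t$ with $g(t)$ a sixth power or $-432$ times a sixth power via Faltings, exactly as the paper does). But the proof is not complete, and the missing step is not a technicality --- in your setup it carries the entire content of the theorem. You must show that for at least one $i$ the class $[(t_{0},y_{i})-(t_{0},-y_{i})]$ is of infinite order on the Jacobian of $\cal{G}_{v_{i}}$. (Note that speaking of ``the order of the marked point'' already presupposes a rational origin on the quartic model $y^{2}=Q_{v}(t)$, which exists only when $3v+a$ is a square; the divisor-class formulation is the correct one.) Your proposed route --- Mazur's uniform bound plus the claim that each torsion locus meets the bisection $\{t=t_{0}\}$ in finitely many points --- reduces this to showing that $\{t=t_{0}\}$ is not a torsion bisection of $\cal{G}$, i.e.\ that the tautological section of the Jacobian fibration pulled back to the genus-one base $E^{t_{0}}$ is non-torsion. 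You explicitly defer this (``this is where I expect the real work to lie'') and give no argument; I do not see an easy one, since it is a statement about a specific section of an elliptic surface over a genus-one base that would have to be verified for every monic even sextic $g$, and a specialization theorem over $E^{t_{0}}$ only helps \emph{after} the generic non-torsion statement is known. As it stands the proposal is a reduction of the theorem to an unproved, and not obviously easier, claim.

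For contrast, the paper sidesteps this issue entirely: it performs the explicit linear substitution $X=pT+x_{0}$, $Y=qT+y_{0}-t_{0}^{3}$, $t=T+t_{0}$ on the model $Y^{2}+2t^{3}Y=X^{3}+at^{4}+ct^{2}+e$, solves $a_{1}=a_{4}=0$ for $p,q$, and reads off a new rational point with nonzero coordinates on a new fibre $E^{t_{1}}$; that point is non-torsion directly by the explicit torsion classification (with Faltings supplying the finitely many exceptional starting points to avoid, and ensuring $E^{t_{1}}$ is not isomorphic to an earlier fibre), and the construction is then iterated. No auxiliary fibration, and hence no torsion-multisection question, ever arises. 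If you wish to salvage your approach, the honest remaining task is to prove that the generic fibre of $\cal{G}$ over $\Q(E^{t_{0}})$ acquires a point of infinite order from the tautological bisection; until that is done there is a genuine gap.
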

\begin{proof}
Because $g$ is even we can assume that $g(t)=t^6+at^4+ct^2+e$ for
certain $a,\;c,\;e\in\Z$ with $a\neq 0$ or $c\neq 0$. The case
$a=c=0$ will be discussed in the next section. For the proof it
will be more convenient to work with the surface $\cal{F}^{g}$
given by the equation
\begin{equation*}
\cal{F}^{g}: Y^2+2t^3Y=X^3+at^4+ct^2+e.
\end{equation*}
Let us denote $G(X,\;Y,\;t):=Y^2+2t^3Y-(X^3+at^4+ct^2+e)$. Then
$\cal{E}^{g}$ is birationally equivalent with $\mathcal{F}^{g}$ by
the mapping $(x,\;y,\;t)=(X,\;Y+t^3,\;t)$ with the inverse
$(X,\;Y,\;t)=(x,\;y-t^3,\;t)$. From the assumption there exists
$t_{0}\in\Q$ such that there are infinitely many rational points
on $E^{t_{0}}$. Thus, there is a rational point
 $(x_{0},\;y_{0},\;t_{0})$ on the surface $\mathcal{E}^{g}$ such
 that $x_{0}y_{0}\neq 0$. Then the point
$(x_{0},\;y_{0}-t_{0}^3,\;t_{0})$ is on the surface
$\mathcal{F}^{g}$. Let us put
$X=pT+x_{0},\;Y=qT+y_{0}-t_{0}^3,\;t=T+t_{0}.$ Then
\begin{equation*}
G(X,\;Y,\;t)=a_{1}T+a_{2}T^2+a_{3}T^3+a_{4}T^4,
\end{equation*}
where
\begin{align*}
&a_{1}=-3px_{0}^2+2qy_{0}-2ct_{0}-4at_{0}^3-6t_{0}^5+6t_{0}^2y_{0},\\
&a_{2}=q^2+6qt_{0}^2-3p^2x_{0}-c+6at_{0}^2-6t_{0}^4+6t_{0}y_{0},\\
&a_{3}=-p^3+6qt_{0}-4at_{0}-2t_{0}^3+2y_{0},\\
& a_{4}=2q-a.
\end{align*}
Solving the system of equations $a_{1}=a_{4}=0$ with respect to
$p,\;q$ we obtain
\begin{equation}\label{R5}
p=-\frac{2ct_{0}+4at_{0}^3+6t_{0}^5-ay_{0}-6t_{0}^2y_{0}}{3x_{0}^2},\quad
q=\frac{a}{2}.
\end{equation}
For $p,\;q$ defined in this way, the equation
$G(pT+x_{0},\;qT+y_{0}-t_{0}^3,\;T+t_{0})=0$ has the root $T=0$
and the root
\begin{equation}\label{R6}
T=-\frac{q^2+6qt_{0}^2-3p^2x_{0}-c+6at_{0}^2-6t_{0}^4+6t_{0}y_{0}}{-p^3+6qt_{0}-4at_{0}-2t_{0}^3+2y_{0}}.
\end{equation}
From the above computations we can see that the point
$(pT+x_{0},\;qT+y_{0}-t_{0}^3,\;T+t_{0})$ for $p,\;q$ given by
(\ref{R5}) and $T$ defined by (\ref{R6}), lies on the surface
$\mathcal{F}^{g}$. Hence we obtain the point
$P=(pT+x_{0},\;qT+y_{0}-t_{0}^3+(T+t_{0})^3,\;T+t_{0})$ on the
surface $\mathcal{E}^{g}$. Because the set of rational points on
$E^{t_{0}}$ is infinite, we can assume that the coordinates of the
point $P$ are nonzero, $g(T)\neq 0,-432$ and $g(T+t_{0})/g(t_{0})$
is not a sixth power. If we now put $t_{1}=T+t_{0}$, then the
curve $E^{t_{1}}$ has infinitely many rational points.

Let us now suppose that we have already constructed
$t_{1},\;\ldots,\;t_{n}$ such that the curve $E^{t_{i}}$ has a
positive rank for $i=1,\;\ldots,\;n.$ Then we can apply the above
procedure to the point $(x_{n},\;y_{n},\;t_{n})$, where
$(x_{n},\;y_{n})$ is a rational point on $E^{t_{n}}$ such that $T$
given by (\ref{R6}) fulfills the conditions: $g(T+t_{n})\neq
0,\;-432$ and $g(T+t_{n})/g(t_{i})$ is not a sixth power for
$i=1,\;\ldots, \;n$. Why can we find such $T$ given by (\ref{R6})
and fulfilling these conditions? Let us note that if the
polynomial $g$ does not have the root of multiplicity 5, then
there are only finitely many rational points on every curve
$g(u)=g(t_{i})v^6$ (of genus $> 1$) for $i=1,\;\ldots,\;n.$ It is
an immediate consequence of the Faltings theorem (\cite{Fal}). The
case when $g$ has the root of multiplicity 5 (it is a rational
root then) can be easily excluded, as then the surface
$\cal{E}^{g}$ is rational over $\Q$. Because there are infinitely
many rational points on the curve $E^{t_{n}}$, we can see that $T$
given by (\ref{R6}) can be selected to fulfill all the necessary
conditions. Using now the previous reasoning we can construct an
infinite set of values $t\in\Q$ such that $E^{t}$ has a positive
rank.
\end{proof}

\begin{rem}\label{rem7}
{\rm Let us note  that if $g(t_{0})=0$ for a rational number
$t_{0}$, then the set of rational points on the curve
$E^{t_{0}}:\;y^2=x^3$ is parametrized by $x=u^2,\;y=u^3$. Using
the reasoning from Theorem \ref{thm6} we can easily deduce that in
this case it is possible to construct a rational curve on the
surface $\cal{E}^{g}$.}
\end{rem}

From the above remark we obtain the following

\begin{cor}\label{cor8}
Let $h\in\Q[t]$ with $\op{deg}h=5,\;h(0)=1$ and let us consider
the surface $\cal{S}:\;y^2=x^3+h(t)$. Then, there is a rational
base change $t=\gamma(u)$ such that we have a non-torsion section
on the surface $\cal{S}^{\gamma}:\;y^2=x^3+h(\gamma(u))$.
\end{cor}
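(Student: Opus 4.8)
The plan is to reduce Corollary \ref{cor8} to the degree-six analysis of Section 3 by passing to the reciprocal polynomial, and then to carry the resulting non-torsion section back to $\cal{S}$ by means of the standard scaling relating the fibres over $t$ and over $1/t$. Concretely, I would set $g(t):=t^6h(1/t)$. Writing $h(t)=\sum_{i=0}^{5}h_it^i$ with $h_0=h(0)=1$ and $h_5\neq0$ (as $\op{deg}h=5$), one obtains $g(t)=t^6+h_1t^5+h_2t^4+h_3t^3+h_4t^2+h_5t$. Hence $g$ is monic of degree six, $g(0)=0$, and the coefficient $h_5$ of $t$ is nonzero, so $t=0$ is a simple rational root of $g$ and $g(t)\neq t^6$. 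Thus $g$ falls precisely under the hypotheses of Section 3 and, in addition, has a rational root.

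Next I would apply Remark \ref{rem7} with the root $t_0=0$. Since $g(0)=0$, the fibre $E^0:y^2=x^3$ is parametrized by $(u^2,u^3)$, and the construction used in the proof of Theorem \ref{thm6} yields a rational curve on $\cal{E}^g$, that is, a non-constant rational base change $t=\beta(u)$ together with a non-torsion section $(x(u),y(u))$ on $\cal{E}^{g\circ\beta}:y^2=x^3+g(\beta(u))$.

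Finally I would transport this section to $\cal{S}$. For $t\neq0$, put $s=1/t$ and $(w,v)=(x/t^2,y/t^3)$; since $g(t)/t^6=h(1/t)=h(s)$, this defines an isomorphism of elliptic curves from $E^t:y^2=x^3+g(t)$ onto $v^2=w^3+h(s)$. Performing it fibrewise over the base change $t=\beta(u)$ and setting $\gamma(u):=1/\beta(u)$, I obtain an isomorphism $\cal{E}^{g\circ\beta}\to\cal{S}^{\gamma}:v^2=w^3+h(\gamma(u))$ defined over $\Q(u)$, which carries the section $(x(u),y(u))$ to $\big(x(u)/\beta(u)^2,\,y(u)/\beta(u)^3\big)$. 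An isomorphism of elliptic curves over $\Q(u)$ preserves the torsion subgroup, so the image is again non-torsion; equivalently, the product of its coordinates equals $x(u)y(u)/\beta(u)^5\neq0$, whence it is non-torsion by the classification recalled at the start of Section 3. Renaming the parameter $s$ as $t$, this is exactly the desired base change $t=\gamma(u)$ with a non-torsion section on $\cal{S}^{\gamma}$.

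The step I expect to require the most care is the application of Remark \ref{rem7} to a polynomial $g$ that need not be even. I would make explicit that the construction behind that remark uses only the existence of a simple rational root of $g$ (equivalently $h_5\neq0$) together with the infinitude of the fibre $y^2=x^3$, and that the degree-reducing step of Theorem \ref{thm6} goes through after completing the square with a general cubic $\ell(t)=t^3+\frac{h_1}{2}t^2+\cdots$ in place of $t^3$. A secondary point to verify is that $\beta$ is non-constant and $\beta(u)\not\equiv0$, so that $\gamma=1/\beta$ and the scaling are legitimate; this holds automatically because the rational curve produced by Remark \ref{rem7} genuinely moves $t$ away from the value $0$.
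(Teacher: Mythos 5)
Your proposal is correct and follows essentially the same route as the paper: pass to the reciprocal polynomial $g(t)=t^6h(1/t)$, note $g(0)=0$ and invoke Remark \ref{rem7} to get a rational curve on $\mathcal{E}^{g}$, then transport the section back to $\mathcal{S}$ via the scaling $(x,y,t)\mapsto(x/t^2,y/t^3,1/t)$. The extra care you take in justifying that the Remark \ref{rem7} construction applies to a non-even $g$ with a simple rational root is a detail the paper glosses over, but it is the same argument.
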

\begin{proof}
Let us note that the surface $\cal{S}$ is birationally  equivalent
with the surface $\cal{E}^{g}$, where $g(t)=t^6h(1/t)$. The
mapping from $\cal{S}$ to $\cal{E}^{g}$ is given by
$(x,\;y,\;t)\mapsto (x/t^2,\;y/t^3,\;1/t)$. Because $g(0)=0$, we
can use the Remark \ref{rem7} to obtain the statement of our
corollary.
\end{proof}

\begin{exam}\label{exam9}
{\rm Let $g(t)=t^6+t^2+1$ and let us consider the surface
$\cal{E}^{g}: y^2=x^3+g(t)$. For $t_{0}=1$ on the curve
$E^{1}:y^2=x^3+3$ we have a non-torsion point $P=(1,\;2)$. Now we
calculate the quantities $p,\;q$ given by (\ref{R5}) and $T$ given
by (\ref{R6}) from the proof of Theorem \ref{thm6}. We obtain
$p=16/13,\;q=-1/13,\;T=-358/169$ and next
$t_{1}=T+t_{0}=-189/169$. Thus, we can see that on the curve
\begin{equation*}
E^{t_{1}}:\;y^2=x^3+\frac{47*2085456070589}{ 13^{12}}
\end{equation*}
we have a non-torsion point
\begin{equation*}
P=\Big(-\frac{3531}{2197},\;\frac{1137934}{4826809}\Big).
\end{equation*}}
\end{exam}

\bigskip

Similarly to the case of the surface $\cal{E}_{f}$ considered in
Section 2, we can ask whether for a given polynomial $g$ of the
form $g(t)=t^6+at^4+ct^2+e$ there is $t_{0}\in\Q$ such that the
curve $E^{t_{0}}$ has infinitely many rational points.

In the following section we will prove that the answer to this
question is positive for polynomials of the form $g(t)=t^6+e$.
Using computer we checked that if $\op{max}\{|a|,\;|c|,\;|e|\}\leq
10$, then there exists $t\in\Q$ such that infinitely many rational
points lie on the curve $E^{t}: y^2=x^3+t^6+at^4+ct^2+e$. This
leads us to the following

\begin{conj}\label{conj2}
Let $a,\;c,\;e\in\Z$ and $g(t)=t^6+at^4+ct^2+e$. Then there exists
$t_{0}\in\Q$ such that there are infinitely many rational points
on the curve $E^{t_{0}}$.
\end{conj}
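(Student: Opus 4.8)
The plan is to reduce the conjecture to the production of a single good fibre and then to attack that by an explicit parametric search. Since $g(t)=t^6+at^4+ct^2+e$ is monic, even, and of degree six, Theorem \ref{thm6} applies: the existence of one $t_0\in\Q$ with $E^{t_0}$ of positive rank already forces infinitely many such $t_0$. Thus the genuine content of the conjecture is to exhibit \emph{one} rational point $(x_0,y_0)$ of infinite order on \emph{one} fibre $E^{t_0}$. By the torsion classification recalled at the beginning of this section, it is enough to find $(x_0,y_0)$ with $x_0y_0\neq 0$ lying on some $E^{t_0}$ with $g(t_0)\notin\{1,-432\}$; any such point is automatically non-torsion.

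First I would look for an honest rational curve on $\cal{E}^g$, imitating Theorem \ref{thm5}. The ansatz $x=\alpha t^2+\beta$, $y=\gamma t^3+\delta t$ in $y^2=x^3+g(t)$ leads, after matching coefficients, to the system $\gamma^2=\alpha^3+1$, $2\gamma\delta=3\alpha^2\beta+a$, $\delta^2=3\alpha\beta^2+c$, $\beta^3=-e$. Whenever $e$ is a cube and the remaining system in $\alpha,\gamma,\delta$ has a rational solution, this yields a non-torsion section over $\Q$, hence positive rank on all but finitely many fibres. One can widen the net by writing $g(t)=P(t^2)$ with $P(s)=s^3+as^2+cs+e$ and searching on $y^2=x^3+P(s)$ for points whose $s$-coordinate is a perfect square, and by allowing the parameters above to vary; together with Remark \ref{rem7}, which settles every $g$ with a rational root, and the treatment of $g(t)=t^6+e$ in Section 4 via the equation $x^2-y^3-g(z)=t$ in $\Q[t]$, this disposes of several infinite subfamilies.

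The hard part is the generic even sextic, for which all of these ans\"atze degenerate: the symmetry $t\mapsto-t$ forces the denominator $\chi_2$ of Theorem \ref{thm5} to vanish identically, so no base change manufactures a section, and the coefficient-matching systems above admit rational solutions only for special triples $(a,c,e)$. What remains to be shown is that the thin set $\{g(t_0):t_0\in\Q\}$ meets the set of $D$ for which the Mordell curve $y^2=x^3+D$ has positive rank. Here Faltings' theorem, exactly as used inside Theorem \ref{thm6}, only lets us \emph{propagate} one good fibre to infinitely many; it offers no mechanism to \emph{initiate} the first one. Producing that first positive-rank fibre for an arbitrary even sextic therefore appears to lie beyond the elementary methods of this paper, and this is precisely why the statement is recorded only as a conjecture, supported by the verification for $\op{max}\{|a|,\;|c|,\;|e|\}\le 10$.
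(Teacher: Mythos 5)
You have not produced a proof, and to your credit you say so explicitly in your final paragraph: the entire content of the statement is the existence of a single $t_{0}\in\Q$ with $E^{t_{0}}$ of positive rank, and your argument never manufactures that $t_{0}$ for a general triple $(a,c,e)$. That is the gap, and it is not a repairable one within the methods of this paper --- the statement is recorded there only as a conjecture, supported by computer verification for $\op{max}\{|a|,|c|,|e|\}\leq 10$, so there is no proof of it in the paper to compare yours against. Your preliminary analysis is sound and consistent with the paper's partial results: Theorem \ref{thm6} does amplify one good fibre into infinitely many (though note the conjecture as stated only asks for one $t_{0}$, so the amplification is a bonus rather than a reduction); the torsion classification does show that a point with $x_{0}y_{0}\neq 0$ on a fibre with $g(t_{0})\notin\{1,-432\}$ is non-torsion; the coefficient-matching for the ansatz $x=\alpha t^{2}+\beta$, $y=\gamma t^{3}+\delta t$ is computed correctly; and the subfamilies you dispose of (rational root of $g$ via Remark \ref{rem7}, and $g(t)=t^{6}+e$ via Corollary \ref{cor13}) are exactly the ones the paper handles. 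But none of this touches the generic even sextic, where, as you correctly observe, the symmetry $t\mapsto -t$ kills the base-change construction of Theorem \ref{thm5} ($\chi_{2}\equiv 0$) and Faltings only propagates, never initiates.

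If you want to present this, present it as what it is: a reduction of the conjecture to the single assertion that the value set $\{g(t_{0}):t_{0}\in\Q\}$ meets the set of $D$ with $\op{rank}(y^{2}=x^{3}+D)>0$, together with a list of subfamilies where that assertion is verified. Do not label it a proof of Conjecture \ref{conj2}.
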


In view of Theorem \ref{thm6} a natural question arises

\begin{ques}\label{ques1}
Let $g(t)=t^6+at^4+ct^2+e$ and let us consider the surface
$\cal{E}^{g}$. What are the conditions guaranteeing the existence
of a rational base change $t=\kappa(u)$ such that there is a
non-torsion section on the surface $\cal{E}^{g\circ\kappa}$?
\end{ques}

\section{Some results on the diophantine equation $x^2-y^3-g(z)=t$}

Let $g(z)=z^6+az^4+bz^3+cz^2+dz+e \in\Z[z]$ and let $t$ be a
variable. In this section we will deal with the diophantine
equation of the form
\begin{equation}\label{R7}
x^2-y^3-g(z)=t.
\end{equation}

We will show that if there are infinitely many rational points on
the curve $C: v^2=s^4-12as^2+48bs+6(a^2-12c),$ then the equation
(\ref{R7}) has infinitely many solutions in the ring of
polynomials $\Q[t]$. In case when $g(t)=t^6+e$, we will use this
result to prove the promised theorem concerning the existence of a
rational base change $t=\chi_{1}(s)$ such that there exists a
non-torsion section on the surface $\cal{E}^{g\circ\chi_{1}}$. We
will also present some results concerning the representability of
integers in the form $x^2-y^3-g(z)$.

We start with the following

\begin{thm}\label{thm10}
If there are infinitely many rational points on the curve $C:
v^2=s^4-12as^2+48bs+6(a^2-12c)$, in particular if $b\neq 0$ and
$a\equiv 1\pmod 2$, then the equation {\rm (\ref{R7})} has
infinitely many solutions in the ring of polynomials $\Q[t]$.
\end{thm}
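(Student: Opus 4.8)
The plan is to produce the solutions as a one--parameter family indexed by the rational points of $C$, by looking for $x,y,z\in\Q[t]$ of a very special shape. Concretely, I would search for a solution in which $z$ is a \emph{linear} polynomial in $t$ and $x=X(z)$, $y=Y(z)$ are compositions of $z$ with fixed polynomials $X,Y\in\Q[z]$. Then
\[
x^2-y^3-g(z)=X(z)^2-Y(z)^3-g(z),
\]
and the whole problem reduces to choosing $X,Y$ so that the right--hand side, as a polynomial in $z$, has degree at most $1$, say $\ell_1 z+\ell_0$ with $\ell_1\neq 0$. Once this is achieved, setting $t=\ell_1 z+\ell_0$, i.e.\ $z=(t-\ell_0)/\ell_1\in\Q[t]$, turns the polynomial identity $X(z)^2-Y(z)^3-g(z)=\ell_1z+\ell_0$ into exactly $x(t)^2-y(t)^3-g(z(t))=t$ in $\Q[t]$.

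Since $g$ is monic of degree $6$, cancelling the top of $X^2-Y^3-g$ forces $\op{deg}X=3$, $\op{deg}Y=2$ with leading coefficients $p,q$ satisfying $p^2-q^3=1$; the only rational solution with $q\neq 0$ (so that $Y$ is genuinely quadratic) is $p=3,\;q=2$. Writing $X=3z^3+X_2z^2+X_1z+X_0$ and $Y=2z^2+Y_1z+Y_0$, I would match the coefficients of $z^6,\dots,z^2$ against those of $g$. The equations coming from $z^5,z^4,z^3$ are triangular and determine $X_2,X_1,X_0$ as explicit expressions in the two free quantities $Y_1,Y_0$; substituting these into the $z^2$--equation yields a single quadratic equation in $Y_0$ whose discriminant is, up to the square factor $144$, exactly $s^4-12as^2+48bs+6(a^2-12c)$ with $s=Y_1$. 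Hence $Y_0$ is rational precisely when $(s,v)\in C$, and every rational point of $C$ produces rational $X_2,X_1,X_0,Y_0$, hence a genuine solution $(x,y,z)\in\Q[t]^3$. Distinct values of $s$ give distinct solutions, so infinitely many points on $C$ yield infinitely many solutions; along the way one must check that $\ell_1\neq0$ for all but finitely many of them, which amounts to the non--vanishing of an explicit rational function of $s$.

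For the ``in particular'' clause I would show directly that $C$ has infinitely many rational points when $b\neq0$ and $a\equiv 1\pmod 2$. The quartic $v^2=f(s)$ has the two rational points at infinity, so it is an elliptic curve, and when $b\neq 0$ it also carries the explicit finite point $P=\bigl(s_0,\;s_0^2-6a\bigr)$ with $s_0=(5a^2+12c)/(8b)$, obtained by forcing $f(s)=(s^2-6a)^2$. After converting $C$ to a Weierstrass model I would argue that $P$ has infinite order, using the hypothesis $a\equiv 1\pmod 2$ to guarantee (via Nagell--Lutz, i.e.\ non--integrality of the transformed point, or a reduction argument modulo a small prime) that $P$ cannot be torsion. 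I expect this non--torsion verification to be the main obstacle: the reduction to $C$ is a finite, essentially forced computation, whereas establishing positive rank of $C$ under the stated congruence is the delicate arithmetic step; some care is also needed to ensure $f$ is squarefree (so that $C$ really is of genus one and $P$ is a smooth point) and to dispose of the degenerate sub--cases.
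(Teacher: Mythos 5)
Your construction of the polynomial solutions is exactly the paper's: the same ansatz $x=3T^3+pT^2+qT+r$, $y=2T^2+sT+u$, $z=T$ (your $Y_1,Y_0$ are the paper's $s,u$), the same triangular elimination from the coefficients of $T^5,T^4,T^3$, and the same quadratic in $u$ whose discriminant is $144\bigl(s^4-12as^2+48bs+6(a^2-12c)\bigr)$, followed by the substitution $T=(t-a_0)/a_1$. Two small remarks on that part. First, your worry about the quartic $U(s)$ failing to be squarefree is resolved in the opposite direction: if $U$ has a multiple root, $C$ is a rational curve, so it certainly has infinitely many rational points and the hypothesis of the theorem holds trivially; the paper simply splits off this case. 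Second, both you and the paper assert without proof that the linear coefficient $a_1$ (your $\ell_1$) vanishes at only finitely many points of $C$; this needs $a_1$ to be non-constant as a function on $C$, and the paper's Remark following the theorem shows it can indeed vanish at individual points, so the claim deserves at least a sentence.

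The genuine gap is in the ``in particular'' clause, and you flag it yourself: you name the tool (Nagell--Lutz after passing to a Weierstrass model) but never carry out the verification that your point is non-torsion, which is precisely where the hypothesis $a\equiv 1\pmod 2$ must be used concretely. The paper closes this as follows. It transforms $C$ to $E:\;Y^2=X^3-72(a^2-4c)X+64(a^3+36b^2-36ac)$, on which the second point at infinity of $C$ becomes the \emph{integral} point $P=(8a,\,48b)$; the hypothesis $b\neq 0$ guarantees $P$ is not $2$-torsion. Doubling gives the $X$-coordinate of $2P$ equal to $\bigl(25a^4-256ab^2+120a^2c+144c^2\bigr)/(16b^2)$; when $a$ is odd the numerator is odd while the denominator is even, so $2P$ has a non-integral coordinate and by Nagell--Lutz cannot be torsion, whence $E$ has positive rank. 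Your candidate finite point $\bigl(s_0,\,s_0^2-6a\bigr)$ with $s_0=(5a^2+12c)/(8b)$ maps, up to sign, exactly to $2P$ under this birational transformation, so your strategy is the correct one; but the parity argument is far easier to run on $2P$ itself, where one starts from a point with integer coordinates and the denominator $16b^2$ appears explicitly from the duplication formula, than on your point, whose coordinates already have $b$ in the denominator before any transformation. Until that computation (or an equivalent one) is done, the positive-rank assertion for $C$ under ``$b\neq 0$ and $a\equiv 1\pmod 2$'' remains unproved.
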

\begin{proof}
Let us denote $G(x,y,z):=x^2-y^3-g(z)$ and observe that the
question about solvability of the equation $G(x,y,z)=t$ in
polynomials with rational coefficients is equivalent to the
question about the construction of polynomials $x,\;y,\;z\in\Q[t]$
such that $\op{deg}G(x(t),y(t),z(t))=1.$ Let us now put
$x=3T^3+pT^2+qT+r,\;y=2T^2+sT+u,\;z=T$. Then
\begin{equation*}
G(3T^3+pT^2+qT+r,\;2T^2+sT+u,\;T)=a_{0}+a_{1}T+a_{2}T^2+a_{3}T^3+a_{4}T^4+a_{5}T^5,
\end{equation*}
where
\begin{align*}
&a_{0}=r^2-u^3-e,\quad a_{1}=-d+2qr-3su^2,\\
&a_{2}=-c+q^2+2pr-3s^2u-6u^2, \quad a_{3}=-b+2pq+6r-s^3-12su,\\
&a_{4}=-a+p^2+6q-6s^2-12u,\quad a_{5}=6(p-2s).
\end{align*}
Solving the system of equations $a_{3}=a_{4}=a_{5}=0$ with respect
to $p,\;q,\;r$ we obtain
\begin{equation}\label{R8}
p=2s,\quad q=\frac{a+2s^2+12u}{6},\quad
r=\frac{3b-2as-s^3+12su}{18}.
\end{equation}
After substituting $p,\;q,\;r$ into the equation $a_{2}=0$ and
solving this equation with respect to $u$, we obtain
\begin{equation}\label{R9}
u=\frac{3s^2+2a \pm \sqrt{s^4-12as^2+48bs+6(a^2-12c)} }{12}.
\end{equation}
Thus, we can see that if infinitely many rational points lie on
the curve
\begin{equation*}
C:\quad v^2=s^4-12as^2+48bs+6(a^2-12c)=:U(s),
\end{equation*}
then all but finitely many points on $C$, by (\ref{R8}) and
(\ref{R9}), give us a triple of polynomials $x,\;y,\;z\in\Q[T]$
such that $G(x(T),y(T),z(T))=a_{1}T+a_{0}$ and $a_{1}\neq 0$.
After substitution $T=(t-a_{0})/a_{1}$ we obtain a solution of the
equation $x^2-y^3-g(z)=t$. Let us also note that we always have
infinitely many rational points on $C$ when the polynomial $U$ has
multiple roots, which is equivalent to the condition
$D:=25a^6-144a^3b^2-2592b^4-180a^4c+5184ab^2c-1296a^2c^2-1728c^3=0$.

Since in the case when $D=0$, the curve $C$ is rational over $\Q$,
we can assume that $D\neq 0$. To show that if $b\neq 0$ and
$a\equiv 1\pmod 2$, then infinitely many rational points lie on
the curve $C$, we transform  $C$ into an elliptic curve with
Weierstrass equation. We can do this because the point at infinity
on the curve $C$ is rational. Using the method described in
\cite{Mor} one more time, we birationally transform the curve $C$
into the curve
\begin{equation*}
E:\;Y^2=X^3-72(a^2-4c)X+64(a^3+36b^2-36ac).
\end{equation*}
The mapping transforming $C$ into $E$ is in the form
\begin{equation*}
(s,\;v)=\Big(\frac{48b-Y}{16a-2X},\;2a+\frac{X}{2}-\Big(\frac{48b-Y}{16a-2X}\Big)^2\Big),
\end{equation*}
while the inverse mapping is given by
\begin{equation*}
(X,\;Y)=(2(-2a+s^2+v),\;4(12b-6as+s^3+sv)).
\end{equation*}

Let us note that we have a rational point $P=(8a,\;48b)$ on the
curve $E$. Using the chord and tangent method of adding points on
elliptic curve we obtain $2P=(x_{1},\;y_{1})$ where
\begin{align*}
&x_{1}=\frac{25a^4-256ab^2+120a^2c+144c}{16b^2},\\
&y_{1}=48b+\frac{(5a^2+12c)(25a^4-384ab^2+120a^2c+144c^2)}{64b^3}.
\end{align*}
Because $a\equiv 1\pmod 2$ the numerator $x_{1}$ is odd, and this
means, that $x_{1}\in\Q\setminus\Z$. From the Nagell-Lutz theorem
(\cite{Sil}, page 77) we know that the torsion points on the
elliptic curve $y^2=x^3+px+q,\;p,\;q\in\Z$ have integer
coordinates, therefore, we see that the point $2P$ is not of
finite order. It proves that the curve $E$ has a positive rank;
and we obtain that there are infinitely many rational points on
the curve $C$.
\end{proof}

\begin{rem}\label{rem11}
{\rm After noticing that the point $P=(8a,\;48b)$ lies on the
curve $E$ from the proof of the above theorem we suspected that
this point is not of finite order for $ab \neq 0$ and any
$c\in\Z$. As Professor Schinzel suggested, it is not true. Indeed,
if $a=6p^2,\;c=p(4b-15p^3)$, then the curve $E$ is elliptic, if
$\Delta=-764411904b^2(3b-16p^3)\neq 0$. In this case the point
$P=(6p^2,\;48b)$ is a point of order three on the curve $E$. If we
now put $p=1,\;b=1,$ then $a=6,\;c=-11$. For $a,\;b,\;c$ defined
in this way, the curve $E$ is birationally equivalent with the
curve $E':\;y^2=x^3-360x+2628$. With the assistance of {\sc APECS}
program (\cite{Co}) we found that the rank of the curve $E'$ is
zero. Despite this, there exists a non-trivial solution of the
equation $x^2-y^3-g(z)=t$ and it turns out that it is valid if
$b\neq 0$; this is equivalent to the fact that the point $P$ is
not of order two. Why is it so? If $b\neq 0$, then the order of
the point $P$ equals at least 3 and $s$-coordinate of preimage of
the point $2P$ (which is different from the point at infinity
$\cal{O}$) equals $(5a^2+12c)/18b$. Because the expression $a_{1}$
from the proof of Theorem \ref{thm10} depends linearly on $d$ and
is not identically equal to zero, then there is at least one
$d\in\Z$, for which $a_{1}=0$ and our method does not give a
solution of the equation $x^2-y^3-g(z)=t$.

It should be noted, however, that there exists a polynomial
$g\in\Z[t]$ for which our method does not give a solution of the
equation $x^2-y^3-g(z)=t$. For example, if
$g(t)=t^6+6t^4+6t^3+9t^2-150t$, then the curve $C$ is birationally
equivalent with the elliptic curve $E': y^2+y=x^3-7$. We have that
$\op{Tors}E'=\{\cal{O},\;(3,4),\;(3,-5)\}$ and using {\sc APECS}
program once again, we find that $E'$ has rank equal to zero. In
this case, our method leads us to the identity
\begin{equation*}
 (3T^3+12T^2+33T+25)^2-(2T^2+6T+10)^3-g(T)=-375.
\end{equation*}
}
\end{rem}

Now we will note several interesting corollaries of Theorem
\ref{thm10}

\begin{cor}\label{cor12}
If  infinitely many rational points lie on the curve
$C:\;v^2=s^4-12as^2+48bs+6(a^2-12c)$, then every polynomial
$h\in\Q[t]$ can be represented in infinitely many ways  in the
form $x^2-y^3-g(z)$, where $x,\;y,\;z\in\Q[t]$.
\end{cor}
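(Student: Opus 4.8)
The plan is to recycle the parametric family constructed in the proof of Theorem \ref{thm10} and to feed the arbitrary polynomial $h$ into it in place of the variable. Recall that for all but finitely many rational points $(s,v)$ on $C$ the proof of Theorem \ref{thm10} produces, via (\ref{R8}) and (\ref{R9}), a triple of polynomials $x,y,z\in\Q[T]$ for which $G(x(T),y(T),z(T))=a_{1}T+a_{0}$, where $a_{0},a_{1}\in\Q$ depend on the chosen point and $a_{1}\neq 0$. This single linear identity is the only input I need; everything substantive (the existence of the family and the nonvanishing of $a_{1}$) is already granted by that theorem.

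Given an arbitrary $h\in\Q[t]$, I would substitute $T=(h(t)-a_{0})/a_{1}$, which is a polynomial in $t$ with rational coefficients. Since $x,y,z$ are polynomials in $T$ over $\Q$, the compositions $x(T),y(T),z(T)$ are again elements of $\Q[t]$, and the identity $G=a_{1}T+a_{0}$ specializes to $G(x,y,z)=a_{1}\cdot\frac{h(t)-a_{0}}{a_{1}}+a_{0}=h(t)$. Hence $h=x^2-y^3-g(z)$ with $x,y,z\in\Q[t]$, which is exactly the desired representation. The whole argument is therefore a one-line substitution once Theorem \ref{thm10} is in hand; the point is merely that nothing in that theorem forced the target of the affine reparametrization to be the variable $t$ rather than an arbitrary polynomial $h$.

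To obtain infinitely many distinct representations I would exploit the infinitude of rational points on $C$. Distinct points generically yield distinct pairs $(a_{0},a_{1})$, hence distinct substitutions $T=(h(t)-a_{0})/a_{1}$ and distinct triples $(x,y,z)$. The only issue requiring care, and the main (though mild) obstacle, is ruling out that infinitely many rational points on $C$ collapse to a single representation. This is handled by observing that $a_{0}$ and $a_{1}$, regarded through (\ref{R8}) and (\ref{R9}) as algebraic functions of the coordinate $s$ on $C$, are non-constant, so each value is attained only finitely often; since infinitely many rational points are available, infinitely many inequivalent triples survive, yielding infinitely many representations of $h$.
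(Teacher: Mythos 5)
Your argument is correct and is exactly the intended one: the paper states this corollary without any proof, treating it as immediate from Theorem \ref{thm10}, whose own proof ends with the substitution $T=(t-a_{0})/a_{1}$; replacing $t$ by $h(t)$ there is all that is needed, as you say. The only place where you are slightly under-justified is the multiplicity count. You assert that $a_{0}$ and $a_{1}$, viewed via (\ref{R8}) and (\ref{R9}) as functions on $C$, are non-constant, but this is not entirely innocent: on the branch of $C$ with $u=(3s^{2}+2a+v)/12$ one has $v\sim s^{2}$ for large $s$, and then the leading $s^{5}$-terms of $2qr$ and $3su^{2}$ in $a_{1}=-d+2qr-3su^{2}$ cancel, so non-constancy only emerges from lower-order terms (compare Remark \ref{rem11}, where the paper itself worries about the behaviour of $a_{1}$). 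A more robust route to infinitude, at least for non-constant $h$, is to bypass $(a_{0},a_{1})$ entirely: since $y=2z^{2}+sz+u$ with $z=T=(h(t)-a_{0})/a_{1}$ non-constant, the coefficients of the resulting triple determine $(s,u)$, and infinitely many rational points on $C$ furnish infinitely many distinct pairs $(s,u)$, hence infinitely many distinct triples. For constant $h$ your triples degenerate to rational numbers and the distinctness argument needs a separate (easy) check; the paper effectively treats that case in Corollary \ref{cor14}.
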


In the following corollary we give the promised proof of the
theorem concerning the existence of rational curves on the surface
$y^2=x^3+t^6+e$.

\begin{cor}\label{cor13}
Let $\cal{E}^{g}:\;y^2=x^3+g(t)$, where $g(t)=t^6+e$, then there
exists a rational base change $t=\chi_{1}(s)$ such that there is a
non-torsion section on the surface
$\cal{E}^{g\circ\chi_{1}}:\;y^2=x^3+g(\chi_{1}(s))$
\end{cor}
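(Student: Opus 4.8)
The plan is to specialize the construction in the proof of Theorem \ref{thm10} to the polynomial $g(t)=t^6+e$, i.e. to the case $a=b=c=d=0$. With these values the auxiliary curve $C:\;v^2=s^4-12as^2+48bs+6(a^2-12c)$ degenerates to $C:\;v^2=s^4$, which is rational and carries infinitely many rational points (parametrized by $v=s^2$). Hence the hypothesis of Theorem \ref{thm10} is satisfied for free, and the construction there produces, for each value of the parameter $s$, polynomials $x(T),\,y(T)\in\Q[T]$ and $z(T)=T$ such that
\[
x(T)^2-y(T)^3-g(T)=a_{1}T+a_{0},
\]
where $a_0,a_1\in\Q$ depend on $s$. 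Evaluating (\ref{R8}) and (\ref{R9}) at $a=b=c=d=0$ gives $p=2s$, and, choosing the minus sign in (\ref{R9}), $u=s^2/6$, $q=2s^2/3$, $r=s^3/18$; a short computation then yields $a_1=-s^5/108$ and $a_0=-s^6/648-e$.

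The key observation is that the identity above holds for \emph{every} $T$, so by specializing $T$ to the unique root $T_0=-a_0/a_1$ of the linear polynomial $a_1T+a_0$ the right-hand side vanishes and we obtain
\[
x(T_0)^2=y(T_0)^3+g(T_0).
\]
Thus, setting $\chi_{1}(s):=T_0=-a_0/a_1$, the pair $(X,Y)=(y(T_0),\,x(T_0))$ is a point of the curve $Y^2=X^3+g(\chi_1(s))$, i.e. a section $\sigma$ on the surface $\cal{E}^{g\circ\chi_1}$. With the values above one finds explicitly
\[
\chi_1(s)=-\frac{s^6+648e}{6s^5},
\]
a nonconstant rational function, so this is a genuine rational base change.

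It remains to check that $\sigma$ is non-torsion, and by the criterion recorded at the beginning of Section 3 it suffices to verify that both coordinates $X(s)=y(T_0)$ and $Y(s)=x(T_0)$ are not identically zero. Since $\chi_1$ is nonconstant, $T_0(s)$ assumes infinitely many distinct values, while the nonzero polynomials $x(T)$ and $y(T)$ each have only finitely many roots; hence neither composition can vanish identically, giving $XY\neq 0$. The only point requiring care --- and the place where the two branches in (\ref{R9}) differ --- is the nonvanishing of $a_1$: the plus sign in (\ref{R9}) leads to $a_1\equiv 0$, which would leave the base change undefined, whereas the minus sign gives $a_1=-s^5/108\neq 0$, so that $T_0$ is well defined for $s\neq 0$ and the method yields a nontrivial section. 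This choice of branch is the main (and essentially the only) obstacle; once it is fixed, the conclusion follows.
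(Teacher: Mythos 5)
Your proposal follows essentially the same route as the paper: specialize the construction of Theorem \ref{thm10} to $a=b=c=d=0$, take the minus branch of (\ref{R9}) (which is the paper's identity (\ref{R10})), and set $T=\chi_{1}(s)=-(s^6+648e)/(6s^5)$ so that the linear remainder vanishes; your values $p=2s$, $q=2s^2/3$, $r=s^3/18$, $u=s^2/6$, $a_{1}=-s^5/108$, $a_{0}=-s^6/648-e$ all agree with the paper's, as does your observation that the plus branch forces $a_{1}=0$ when $d=0$. One caveat: your argument that $X(s)=y(T_{0}(s))$ and $Y(s)=x(T_{0}(s))$ do not vanish identically (``$T_{0}$ takes infinitely many values while $x(T),y(T)$ have finitely many roots'') is not valid as stated, because the coefficients of $x(T)$ and $y(T)$ themselves depend on $s$ --- compare $y(T)=T-s$ with $T_{0}(s)=s$, where the composition is identically zero. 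The needed nonvanishing is nonetheless immediate: for instance $X(s)=2T_{0}^2+sT_{0}+s^2/6$ has discriminant $-s^2/3<0$ as a quadratic in $T_{0}$, so it cannot vanish for real $s\neq 0$; the paper simply writes out the resulting section explicitly, and both coordinates are visibly nonzero rational functions of $s$.
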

\begin{proof}
Let us note that if $a=b=c=0$, then the curve $C$ is rational and
the system of equations $a_{2}=a_{3}=a_{4}=a_{5}=0$ from the proof
of Theorem \ref{thm10} has exactly two solutions given by
\begin{align*}
&p_{1}=2s,\quad q_{1}=\frac{2s^2}{3},\quad r_{1}=\frac{s^3}{18},\quad u_{1}=\frac{s^2}{6},\\
&p_{2}=2s,\quad q_{2}=s^2,\quad r_{2}=\frac{s^3}{6},\quad
u_{2}=\frac{s^2}{3}.
\end{align*}
For such $p_{i},\;q_{i},\;r_{i},\;u_{i},\; (i=1,\;2)$ we obtain
the following identities
\begin{equation}\label{R10}
\Big(3T^3+2sT^2+\frac{2s^2}{3}T+\frac{s^3}{18}\Big)^2-\Big(2T^2+sT+\frac{s^2}{6}\Big)^3-(T^6+dT+e)
\end{equation}
\begin{equation*}
\hskip 1cm  =-\frac{648e+s^6}{648}-\frac{648d+6s^5}{648}T,
\end{equation*}
\begin{equation}\label{R11}
\Big(3T^3+2sT^2+2s^2T+\frac{s^3}{6}\Big)^2-\Big(2T^2+sT+\frac{s^2}{3}\Big)^3-(T^6+dT+e)=-\frac{108e+s^6}{108}-dT.
\end{equation}
If now $d=0$, then putting $T=\chi_{1}(s)=-(648e+s^6)/(6s^5)$ the
right side of the identity (\ref{R10}) disappears and on the
surface $\cal{E}^{g\circ \chi_{1}}:\;y^2=x^3+g(\chi_{1}(s))$ we
obtain a section
\begin{equation*}
\sigma =
\Big(\frac{419904e^2-648es^6+s^{12}}{18s^{10}},\;-\frac{272097792e^3-419904e^2s^6+1944es^{12}+s^{18}}{72s^{15}}\Big).
\end{equation*}
It is easy to see that the order of $\sigma$ is not finite.
\end{proof}

\bigskip

 Let us remind that $a_{1}=-d+2qr-3su^2$, where $q,r,s,u$ are
given by (\ref{R8}) and (\ref{R9}) from the proof of the Theorem
\ref{thm10}.

\bigskip

\begin{cor}\label{cor14}
If $d\in\Z$ and on the curve $
C:\;v^2=s^4-12as^2+48bs+6(a^2-12c)$, there is a rational point
such that $a_{1}\neq 0$, then for every integer $n$ the
diophantine equation $x^2-y^3-g(z)=n$ has solutions in rationals
$x,\;y,\;z$ such that there exists an integer $L_{g}$ dependent
only on the polynomial $g$ such that
$L_{g}x,\;L_{g}y,\;L_{g}z\in\Z$. In particular, for $g(z)=z^6$ we
have $L_{g}=24416=2^9*3^5$
\end{cor}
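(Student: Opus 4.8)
The plan is to extract a polynomial identity from the proof of Theorem \ref{thm10} and then specialise the auxiliary variable $T$ so as to hit each integer in turn, while controlling denominators uniformly. Fix a rational point on $C$ for which $a_{1}\neq 0$. By (\ref{R8}) and (\ref{R9}) this point determines fixed rational numbers $p,q,r,s,u$, and hence three polynomials $x(T),y(T),z(T)\in\Q[T]$ of degrees $3,2,1$ respectively, satisfying the identity $x(T)^2-y(T)^3-g(z(T))=a_{1}T+a_{0}$ in $\Q[T]$. The assumption $a_{1}\neq 0$ guarantees that the right-hand side is genuinely linear in $T$, which is exactly what makes the next step work.

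First I would use this identity to produce, for each $n\in\Z$, a rational solution of $x^2-y^3-g(z)=n$. Setting $T=T_{n}:=(n-a_{0})/a_{1}$ gives $a_{1}T_{n}+a_{0}=n$, so the triple $(x(T_{n}),y(T_{n}),z(T_{n}))$ solves the equation over $\Q$. Solvability in rationals for every integer $n$ is therefore immediate; the real task is to bound the denominators.

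The hard part will be showing that a single $L_{g}$, independent of $n$, clears all of these denominators. The key observation is that $T_{n}$ is an affine-linear function of $n$ with fixed rational coefficients. Writing $a_{0}$ and $a_{1}$ over a common denominator $\delta$ as $a_{0}=\alpha_{0}/\delta$, $a_{1}=\alpha_{1}/\delta$ with $\alpha_{0},\alpha_{1}\in\Z$, one obtains $T_{n}=(n\delta-\alpha_{0})/\alpha_{1}\in\frac{1}{\alpha_{1}}\Z$, so the denominator of $T_{n}$ divides the fixed integer $|\alpha_{1}|$ for \emph{every} $n$; it does not grow with $n$, precisely because $n$ enters linearly. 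Let $M$ be a common denominator of all the coefficients of $x,y,z$. Since $\op{deg}x\leq 3$, evaluating at $T_{n}\in\frac{1}{\alpha_{1}}\Z$ yields $x(T_{n})\in\frac{1}{M|\alpha_{1}|^{3}}\Z$, and likewise for $y$ and $z$. Thus $L_{g}:=M|\alpha_{1}|^{3}$ has the required property, and it depends only on the fixed data attached to $g$.

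Finally, for the explicit value I would take $g(z)=z^{6}$ and use the parametrisation of Corollary \ref{cor13} (identity (\ref{R10})) at the rational point $(s,v)=(2,4)$ of $C:v^2=s^4$. Then $a_{0}=-8/81$ and $a_{1}=-8/27$, so $\delta=81$ and $\alpha_{1}=-24$, while the coefficients of $x,y,z$ have common denominator $M=9$; hence $L_{g}=M|\alpha_{1}|^{3}=9\cdot 24^{3}=2^{9}\cdot 3^{5}=124416$, as claimed. The only content beyond Theorem \ref{thm10} is the uniform-denominator estimate above, and the point to be careful about there is exactly that the bound stays independent of $n$.
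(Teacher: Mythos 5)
Your proof is correct and follows essentially the same route as the paper: specialize the polynomial identity from Theorem \ref{thm10} at the affine-linear value $T=(n-a_{0})/a_{1}$ and observe that the denominators stay bounded independently of $n$ (a point the paper leaves as ``obvious'' but which you rightly make explicit). The only differences are cosmetic --- you take $s=2$ in identity (\ref{R10}) where the paper takes $s=6$, and both computations confirm $L_{g}=2^{9}\cdot 3^{5}=124416$, so the paper's printed value ``$24416$'' is a typo.
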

\begin{proof}
In the light of Theorem \ref{thm10} the first part of the
statement is obvious. Now putting $d=e=0,\;s=6$ and next
$T=(n+72)/72$ into the identity  (\ref{R10}) we obtain
\begin{equation*}
\Big(\frac{n^3-72n^2+15552n+373248}{24416}\Big)^2-\Big(\frac{n^2-72n+5184}{2592}\Big)^3-\Big(\frac{n+72}{72}\Big)^6=n.
\end{equation*}
This proves the second part of our corollary.
\end{proof}

\begin{cor}\label{cor15}
Let $g(z)=z^6+dz$. If $d=1$, then for every integer $n$ the
diophantine equation $x^2-y^3-g(z)=n$ has infinitely many
solutions in integers. If $d=-72t^5+1$ for a certain integer $t$,
then for every integer $n$ the diophantine equation
$x^2-y^3-g(z)=n$ has a solution in integers.
\end{cor}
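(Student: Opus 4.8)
The plan is to derive everything from the two polynomial identities (\ref{R10}) and (\ref{R11}) obtained in Corollary \ref{cor13}, specialized to the present situation. Since $g(z)=z^6+dz$ we are in the case $a=b=c=e=0$ of Theorem \ref{thm10}, so putting $e=0$ in those identities leaves, on each right-hand side, a linear polynomial in $T$ whose two coefficients I can steer by a suitable choice of the free parameter $s$. The whole point is that the auxiliary polynomials $x(T)$ and $y(T)$ in each identity have denominators dividing $18$, $6$, $3$, so a choice of $s$ divisible by $6$ will clear them and make $x,y$ honest integer polynomials in $T$; it then suffices to arrange that the resulting linear form in $T$ on the right represents the prescribed integer $n$ at an integer value of $T$.

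For the case $d=1$ I would use (\ref{R11}). Taking $s=6w$ with $w\in\Z$ turns the cubic and quadratic into $x(T)=3T^3+12wT^2+72w^2T+36w^3$ and $y(T)=2T^2+6wT+12w^2$, both with integer coefficients, and (\ref{R11}) (with $e=0$, $d=1$) becomes
\begin{equation*}
x(T)^2-y(T)^3-(T^6+T)=-432w^6-T.
\end{equation*}
Here the coefficient of $T$ on the right is exactly $-d=-1$, independently of $w$, because in (\ref{R11}) the linear term is literally $-dT$. Setting the right-hand side equal to $n$ gives $T=-(n+432w^6)\in\Z$; substituting this integer value yields integers $x=x(T),\,y=y(T),\,z=T$ with $x^2-y^3-g(z)=n$. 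Letting $w$ run through $\Z$ produces infinitely many solutions, since the coordinate $z=-(n+432w^6)$ takes infinitely many distinct values.

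For the case $d=-72t^5+1$ I would instead use (\ref{R10}), whose right-hand side (with $e=0$) is $-s^6/648-\big((108d+s^5)/108\big)T$. The key observation is that the choice $s=6t$ collapses the linear coefficient: $108d+s^5=108(-72t^5+1)+7776t^5=108$, so the coefficient of $T$ becomes exactly $-1$, while the constant term is $-s^6/648=-72t^6$. The same choice $s=6t$ clears denominators, giving $x(T)=3T^3+12tT^2+24t^2T+12t^3$ and $y(T)=2T^2+6tT+6t^2$ with integer coefficients, and (\ref{R10}) reads
\begin{equation*}
x(T)^2-y(T)^3-(T^6+dT)=-72t^6-T.
\end{equation*}
Setting the right-hand side equal to $n$ gives $T=-(n+72t^6)\in\Z$, and substituting produces an integer solution $x,y,z=T$ of $x^2-y^3-g(z)=n$ for every integer $n$.

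The only thing to check is arithmetic bookkeeping, and that is where the care lies rather than any genuine difficulty: one must confirm that $s=6w$ (respectively $s=6t$) really clears all of the denominators $18,6,3$ appearing in $x(T),y(T)$, and that in the second case the cancellation $108d+s^5=108$ indeed reduces the linear coefficient to $-1$ — this is exactly what makes $-72t^6-T=n$ solvable in integers $T$ for all $n$. No Diophantine input beyond these explicit polynomial identities is needed; in particular the curve $C$ and the rank arguments of Theorem \ref{thm10} play no role here, since $a=b=c=0$ makes $C$ rational and the identities fully explicit.
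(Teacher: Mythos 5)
Your proof is correct and follows essentially the same route as the paper: both cases specialize the identities (\ref{R10}) and (\ref{R11}) with $e=0$ and $s=6t$, clear denominators, and solve the resulting linear equation in $T$ over $\Z$ (with the free parameter supplying infinitely many solutions when $d=1$). Incidentally, your sign conventions match the corollary's statement, whereas the paper's proof text misstates the two cases as $d=-1$ and $d=-72t^5-1$ even though its displayed polynomials agree with yours.
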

\begin{proof}
Let $n$ be a fixed integer. If $d=-1$, then for the proof we will
use  the identity (\ref{R11}). Let us put $e=0,\;s=6t$ and
$T=-432t^6-n$. For polynomials given by
\begin{align*}
x(t)=\; &3n^3+12t(-1+324t^5)n^2+36t^2(1-288t^5+46656t^{10})n\\
      &+36t^3(-1+432t^5-62208t^{10}+6718464t^{15}),\\
y(t)=\; &2n^2+6t(-1+288t^5)n+12t^2(1-216t^5+31104t^{10}),\\
z(t)=\; &-n-432t^6,
\end{align*}
we get $x(t)^2-y(t)^3-g(z(t))=n$.

If now $d=-72t^5-1$, then we put $e=0,\;s=6t,\;T=-n-72t^6$ into
the identity (\ref{R10}). We obtain that $x^2-y^3-g(z)=n$ for
\begin{align*}
x=\; &3n^3+12t(-1+54t^5)n^2+24t^2(1-72^5+1944t^{10})n\\
   &+12t^3(-1+144t^5-5184t^{10}+93312t^{15}),\\
y=\; &2n^2+6t(-1+48t^5)n+6t^2(1-72t^5+1728t^{10}),\\
z=\; &-n-72t^6.
\end{align*}
\end{proof}

\section{Rational points on some non-isotrivial elliptic surfaces}

In view of our consideration it is natural to ask whether it is
possible to obtain similar results for non-isotrivial elliptic
surfaces of the form
\begin{equation*}
\cal{E}:\;y^2=x^3+A(t)x+B(t),
\end{equation*}
where $A,\;B\in\Q[t]\setminus\{0\}$. If $t\mapsto \alpha (t)$ is a
rational base change, then by $\cal{E}_{\alpha}$ let us denote the
surface $\cal{E}_{\alpha}:\;y^2=x^3+A(\alpha (t))x+B(\alpha(t))$.
Let us also remind that if $\cal{C}:y^2=x^3+m(t)x+n(t)$, where
$m,\;n\in\Z[t]$ is an elliptic curve over $\Q(t)$, then the points
of finite order on $\cal{C}$ have coordinates in $\mathbb{Z}[t]$.

In the following section we will prove the generalization of the
first part of Theorem \ref{thm1} and Theorem \ref{thm2}.

\begin{thm}\label{thm16}
Let $\cal{E}:y^2=x^3+f_{4}(t)x+g_{4}(t)$, where
$f_{4},\;g_{4}\in\Q[t]$. If $\op{deg}f_{4}=3$ and
$\op{deg}g_{4}\leq 4$ or $\op{deg}f_{4}=4,\;\op{deg}g_{4}\leq 4$
and at least one of polynomials $f_{4},\;g_{4}$ is not even, then
there exists rational base change $t=\psi(s)$ such that  we have a
non-torsion section on the surface $\cal{E}_{\psi}$.
\end{thm}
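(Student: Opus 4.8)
The plan is to carry the method behind the first part of Theorem \ref{thm1} and behind Theorem \ref{thm2} over to the presence of the extra term $g_{4}$. In every case one introduces a single parameter $u$, makes a polynomial ansatz for $x$ and $y$ in the variable $T=t$, expands
\[
E(T):=y(T)^{2}-x(T)^{3}-f_{4}(T)x(T)-g_{4}(T)
\]
as a polynomial in $T$, and kills its top coefficients so that $E$ becomes linear, $E(T)=a_{0}+a_{1}T$. Its root $T=-a_{0}/a_{1}=:\psi(u)$ is a rational function of $u$, and $t=\psi(u)$ is the sought base change: the pair $(x(\psi(u)),y(\psi(u)))$ is then a section on $\cal{E}_{\psi}$. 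I may assume $f_{4}(t)=at^{4}+\alpha_{3}t^{3}+\alpha_{2}t^{2}+\alpha_{1}t+\alpha_{0}$ and $g_{4}(t)=ct^{4}+\gamma_{3}t^{3}+\gamma_{2}t^{2}+\gamma_{1}t+\gamma_{0}$ with integer coefficients.

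When $\op{deg}f_{4}=4$ (so $a\neq0$) I would keep $x$ constant in $t$, putting $x=X:=(u^{2}-c)/a$, which makes the $t^{4}$-coefficient of $x^{3}+f_{4}x+g_{4}$ equal to $aX+c=u^{2}$. Taking $y=uT^{2}+pT+q$ then cancels the $T^{4}$-term of $E$, leaving a cubic $E(T)=a_{0}+a_{1}T+a_{2}T^{2}+a_{3}T^{3}$; the system $a_{3}=a_{2}=0$ is triangular in $p,q$ (with $u$ in the denominators) and determines them, after which $\psi(u)=-a_{0}/a_{1}$. This is the exact analogue of the construction in Theorem \ref{thm2}, to which it reduces when $g_{4}=0$.

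When $\op{deg}f_{4}=3$ (so $a=0$, $\alpha_{3}\neq0$) the same scheme works with $x$ now linear, $x=\lambda T+\mu$ and $\lambda:=(u^{2}-c)/\alpha_{3}$, so that the $T^{4}$-coefficient of $E$ is again $u^{2}-(\alpha_{3}\lambda+c)=0$ once $y=uT^{2}+pT+q$; here the cubic term of $f_{4}$ plays the role formerly played by the quartic term. One solves $a_{3}=a_{2}=0$ for $p,q$ (the free parameter $\mu$ may simply be set to $0$) and reads off $\psi$. No evenness assumption is needed here, because $\op{deg}f_{4}=3$ forces the odd term $\alpha_{3}t^{3}$ and so $f_{4}$ is never even: since $x$ is linear, $x^{3}$ contributes $\lambda^{3}\sim u^{6}$, which forces $p,q$ to grow in $u$, and the top-degree-in-$u$ part of $a_{1}$ comes from $2pq$ and is a nonzero constant times a positive power of $u$; hence $a_{1}\not\equiv0$ automatically.

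The main obstacle is precisely the nonvanishing $a_{1}\not\equiv0$ in the quartic case, and this is where the hypothesis that $f_{4},g_{4}$ are not both even is used. With the solution of $a_{3}=a_{2}=0$ substituted one has $a_{1}=2pq-(\alpha_{1}X+\gamma_{1})$ with $p=(\alpha_{3}X+\gamma_{3})/(2u)$ and $q=(\alpha_{2}X+\gamma_{2}-p^{2})/(2u)$; if both polynomials were even then $\alpha_{3}=\alpha_{1}=\gamma_{3}=\gamma_{1}=0$, whence $p=0$ and $a_{1}\equiv0$ identically---the degeneration already noted after Theorem \ref{thm5}. I would therefore argue, splitting into sub-cases according to which of $\alpha_{3},\gamma_{3},\alpha_{1},\gamma_{1}$ is the first nonzero odd coefficient, that $a_{1}(u)$ is not identically zero. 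I expect this to be the delicate step, because for isolated choices of the even coefficients the two parameters $p,q$ can still conspire to give $a_{1}\equiv0$; when that happens I would enlarge the ansatz---taking $x$ a general quadratic and $y$ a general cubic in $T$ and killing $T^{6},\dots,T^{2}$---so as to retain a free parameter forcing a nonzero linear term. Finally, to see that the resulting section is non-torsion I would invoke the fact recalled at the start of this section, that every torsion section of an integral model $y^{2}=x^{3}+m(u)x+n(u)$ with $m,n\in\Z[u]$ has coordinates in $\Z[u]$: since $\psi(u)$ carries a genuine denominator, the coordinate $y(\psi(u))$ is a non-polynomial rational function of $u$, so $\sigma$ cannot be torsion. (In the exceptional situation where $\psi$ happens to be a polynomial, I would instead specialize to a value $u_{0}$ at which the point has infinite order on the fibre and apply Silverman's specialization theorem.)
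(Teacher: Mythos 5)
Your construction is essentially the paper's: in the quartic case the substitution $x=(u^2-c)/a$ followed by $y=uT^2+pT+q$ and the triangular system $a_3=a_2=0$; in the cubic case a linear $x$ and quadratic $y$ with the leading coefficient of $x$ forced to be $(u^2-c)/\alpha_3$ (the paper writes $x=pT+q$, $y=rT^2+sT+u$ and solves $a_4=a_3=a_2=0$ for $p,q,u$, which is the same device, and your observation that the $2pq$ term then has large $u$-degree, so $a_1\not\equiv0$ automatically, is correct). Two steps, however, do not go through as written. First, in the quartic case you retain the cubic term $\alpha_3t^3$ of $f_4$, and, as you yourself note, the resulting $a_1=2pq-(\alpha_1X+\gamma_1)$ can then vanish identically for coefficient values that are not both even; your proposed repair (``enlarge the ansatz'') is not carried out. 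The paper sidesteps this by first translating $t$ so that $f_4$ has no cubic term: with $f_4=at^4+bt^2+ct+d$ and $g_4=et^4+ft^3+gt^2+ht+i$ one gets $p=f/(2u)$ and $a_1=cX+h-2pq$, and a short inspection (the pole term $-f^3/(8u^4)$ if $f\neq0$; the term $cu^2/a$ if $f=0$ and $c\neq0$; the constant $h$ if $f=c=0$) shows $a_1\equiv0$ exactly when $c=f=h=0$. Adopt this normalization instead of the sub-case analysis you sketch.

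Second, and more seriously, your non-torsion argument is invalid as stated. The integrality criterion you invoke applies to a model $y^2=x^3+m(u)x+n(u)$ with $m,n\in\Z[u]$, whereas $\cal{E}_{\psi}$ has coefficients $f_4(\psi(u))$, $g_4(\psi(u))$ that are genuinely rational functions of $u$. One must first clear denominators via $(x,y)\mapsto(e(u)^2x,\,e(u)^3y)$, and after that rescaling the coordinates of the section may perfectly well become polynomial even though $\psi$ has a denominator---so ``$\psi$ carries a genuine denominator'' proves nothing. The paper's own proof in fact passes to an integral model $\cal{E}'_{\psi}$, computes the duplicated section $2\sigma'$, and verifies by computer that the $x$-coordinate of $2\sigma'$ lies in $\Q(s)\setminus\Q[s]$; that it must work with $2\sigma'$ rather than $\sigma'$ indicates that $\sigma'$ itself is integral, which is precisely the situation your argument cannot see. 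Your fallback via specialization and Silverman's theorem could in principle work, but it requires exhibiting, uniformly in the coefficients of $f_4,g_4$, a fibre on which the specialized point provably has infinite order, which is no easier than the computation you are trying to avoid. This step needs an actual argument.
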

\begin{proof}
Let us denote $H(x,\;y,\;t):=y^2-(x^3+f_{4}(t)x+g_{4}(t))$. Let us
first consider the case when $\op{deg}f_{4}=3$ and
$\op{deg}g_{4}\leq 4$. Without loss of generality we can assume
that $f_{4}(t)=at^3+bt+c,\;g_{4}(t)=dt^4+et^3+ft^2+gt+h$ for some
$a,\;b,\;\ldots,\;h\in\mathbb{Z}$ with $a\neq 0$ and $g_{4}(t)\neq
0$. Let us put $x=pT+q,\;y=rT^2+sT+u,\;t=T$. For $x,\;y,\;t$
defined in this way we obtain
\begin{equation*}
H(x,\;y,\;t)=a_{0}+a_{1}T+a_{2}T^2+a_{3}T^3+a_{4}T^4,
\end{equation*}
where
\begin{align*}
&a_{0}=-h-cq-q^3+u^2,\quad a_{1}=-g-cp-bq-3pq^2+2su,\\
&a_{2}=-f-bp-3p^2q+s^2+2ru,
\;a_{3}=-e-p^3-aq+2rs,\;a_{4}=-d-ap+r^2.
\end{align*}

Let us note that the system of equations $a_{2}=a_{3}=a_{4}=0$ has
a solution given by
\begin{equation}\label{R12}
p=\frac{-d+r^2}{a},\quad
q=-\frac{-d^3+a^3e+3d^2r^2-3dr^4+r^6-2a^3rs}{a^4},
\end{equation}
\begin{equation*}
u=-\frac{-f-bp-3p^2q+s^2}{2r}.
\end{equation*}
If now $p,\;q,\;u$ are given by (\ref{R12}), then the equation
$H(pT+q,\;rT^2+sT+u,\;T)=0$ has a solution
\begin{equation}\label{R13}
T=-\frac{h+cq+q^3-u^2}{g+cp+bq+3pq^2-2su}=:\psi(r,s).
\end{equation}
In this case we obtain a two-parametric solution of the equation
defining the surface $\cal{E}$. For convenience let us put $r=1$
and $\psi(s):=\psi(1,s)$. Therefore, we can see that if
$p,\;q,\;u$ are given by (\ref{R12}) and $T=t=\psi(s)$, then on
the surface $\cal{E}_{\psi}$ we have a section
$\sigma=(pT+q,\;T^2+sT+u)$. Performing affine change of variables
we transform the surface $\cal{E}_{\psi}$ into the
$\cal{E}'_{\psi}:y^2=x^3+f'_{4}(s)x+g'_{4}(s)$, where
$f'_{4},\;g'_{4}\in\Z[s]$. Then $\sigma$ goes to the section
$\sigma'$ on $\mathcal{E}'_{\psi}$. It turns  out that
$x$-coordinate of the section $2\sigma'$ belongs to $\Q(s)
\setminus \Q[s]$. From the remark given at the beginning of this
section we see that $\sigma'$ is not of finite order. We will not
present here the details of this proof as it requires a lot of
computations which are immensely difficult to perform without
computer.

\bigskip

Let us now consider the case when
$\op{deg}f_{4}=4,\;\op{deg}g_{4}\leq 4$ and at least one of the
polynomials $f_{4},\;g_{4}$ is not even. We can assume that
$f_{4}(t)=at^4+bt^2+ct+d,\;g_{4}(t)=et^4+ft^3+gt^2+ht+i$, where
$a,\;b,\;\ldots,\;i\in\Z,\;a\neq 0$ and at least one of the
numbers $c,\;f,\;h$ is non-zero. Let now $C_{3}$ denote a curve
over $\Q(u)$ obtained from $\cal{E}$ after substitution
$x=(u^2-e)/a$. Hence, we consider the curve of the form
\begin{align*}
C_{3}:\;y^2=u^2t^4&+ft^3+\frac{-be+ag-bu^2}{a}t^2+\frac{-ce+ah-cu^2}{a}t\\
                  &+\frac{-a^2de-e^3+a^3i+(a^2d+3e^2)u^2-3eu^4+u^6}{a^3}=:V(t)
\end{align*}
Now putting $y=uT^2+pT+q,\;t=T$ we obtain
\begin{equation*}
(uT^2+pT+q)^2-V(T)=a_{0}+a_{1}T+a_{2}T^2+a_{3}T^3,
\end{equation*}
where
\begin{align*}
&a_{0}=\frac{-a^2de-e^3+a^3i+(a^2d+3e^2)u^2-3eu^4+u^6}{a^3},\; a_{1}=\frac{-ce+ah-2apq+cu^2}{a},\\
&a_{2}=\frac{-be+ag-ap^2-2aqu+bu^2}{a},\; a_{3}=f-2pu.
\end{align*}
The system of equations  $a_{2}=a_{3}=0$ has a solution given by
\begin{equation}\label{R14}
p=\frac{f}{2u},\quad q=\frac{-af^2-4beu^2+4agu^2+4bu^4}{8au^3}.
\end{equation}
For $p,\;q$ defined in this way the equation
$(uT^2+pT+q)^2-V(T)=0$ has exactly one solution
\begin{equation}\label{R15}
T=\frac{-a^2de-e^3+a^3i+(a^2d+3e^2)u^2-3eu^4+u^6}{a^2(-ce+ah-2apq+cu^2)}=:\psi_{1}(u).
\end{equation}
Now, putting $t=\psi_{1}(u)$ we obtain the section
$\sigma_{1}=((u^2-e)/a,\;uT^2+pT+q)$ on the surface
$\cal{E}_{\psi_{1}}$, and similarly to the previous case, we show
that the order of $\sigma_{1}$ is not finite.
\end{proof}

Using similar method we can prove the following

\begin{thm}\label{thm17}

\noindent
\begin{enumerate}
 \item Let $\cal{E}:y^2=x(x^2+f_{2}(t)x+f_{4}(t))$, where $f_{2},\;f_{4}\in\Q[t]$.
 If $\op{deg}f_{2}\leq 2,\;\op{deg}f_{4}\leq 3$ or $\op{deg}f_{2}\leq 2,\;\op{deg}f_{4}=4$
 and at least one of the polynomials $f_{2},\;f_{4}$ is not even,
 then there exists a rational base change $t=\psi(u)$ such that we have a non-torsion section on the
 surface
 $\cal{E}_{\psi}$.
 \item If $\op{deg}f_{2}=2,\;\op{deg}f_{4}=4$ and there is $t_{0}\in\Q$ such that the curve
$\cal{E}_{t_{0}}:y^2=x(x^2+f_{2}(t_{0})x+f_{4}(t_{0}))$ has
infinitely many rational points, then there exists a rational base
change $t=\psi(u)$ such that we have a non-torsion section on the
surface $\cal{E}_{\psi}$ .

\item Let $\cal{E}:y^2=x(x^2+f_{4}(t)x+g_{4}(t))$, where
$f_{4},\;g_{4}\in\Q[t]$ and
 $\op{deg}f_{4}=\op{deg}g_{4}=4$. If at least one of the polynomials $f_{4},\;g_{4}$ is not even,
 then there exists a rational base change $t=\psi(u)$ such that we have a non-torsion section on the
 surface
 $\cal{E}_{\psi}$.
 \end{enumerate}
\end{thm}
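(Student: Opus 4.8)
The plan is to adapt, case by case, the constructions already used for Theorems \ref{thm1}, \ref{thm2} and \ref{thm16}; the one genuinely new feature is that each surface $y^2=x(x^2+\dots)$ carries the rational two-torsion point $(0,0)$. When it is convenient I would exploit this exactly as in the proof of Theorem \ref{thm1}, passing through $y=xY$ to the birationally equivalent surface $\cal{E}':XY^2=X^2+(\dots)X+(\dots)$; dividing by $x$ lowers the degree in the first coordinate and puts the equation in the same shape as $\cal{E}'_f$. In every case the scheme is identical: substitute polynomials in a single auxiliary variable $T$ for the coordinates and a linear expression for $t$, collect the result as a polynomial in $T$, annihilate its top coefficients by solving a triangular system for part of the free parameters, and read off the surviving low-degree root $T$, which yields the base change $t=\psi(u)$ and an explicit section $\sigma$.

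For part (1) I would split along the two stated ranges. When $\op{deg}f_2\le2$ and $\op{deg}f_4\le3$, the substitution $X=pT+q$, $Y=rT+s$, $t=T$ in $\cal{E}'$ produces a cubic $a_0+a_1T+a_2T^2+a_3T^3$, exactly as in part (1) of Theorem \ref{thm1}; solving $a_3=a_2=0$ for two parameters and reading $T=-a_0/a_1$ gives a two-parameter family of sections, one parameter of which becomes $\psi$. When $\op{deg}f_2\le2$ and $\op{deg}f_4=4$ I would instead follow Theorem \ref{thm2}: writing the leading coefficient of $f_4$ as $\alpha$ and freezing $x=\alpha u^2$ in the original model makes the $t^4$-coefficient of the resulting quartic $y^2=h(t)$ equal to $(\alpha u)^2$, so its point at infinity is rational; the substitution $y=\alpha uT^2+pT+q$, $t=T$ then leaves a cubic, and $a_3=a_2=0$ determines $p,q$ and a single remaining root. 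The hypothesis that $f_2$ or $f_4$ is not even is used precisely here: the denominator of that root is assembled from the odd-degree coefficients of $f_2,f_4$, and non-evenness guarantees it is not identically zero, just as $ac\neq0$ was needed in Theorem \ref{thm2}.

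Part (2) is the analogue of part (2) of Theorem \ref{thm1}. Using the hypothesis I would fix a fibre $\cal{E}_{t_0}$ with infinitely many rational points, choose a point $(x_0,y_0)$ on it with $x_0y_0\neq0$, and in $\cal{E}'$ substitute $X=pT^2+qT+x_0$, $Y=rT+y_0/x_0$, $t=T+t_0$; here $T=0$ is automatically a root, and imposing two further linear conditions on $p,q$ — solvable whenever a certain polynomial in $(x_0,y_0)$ is nonzero, which holds for all but finitely many of the infinitely many available points — forces a triple root at $T=0$, after which the last factor yields the root $T$ defining $t=\psi(r)$. For part (3), with $\op{deg}f_4=\op{deg}g_4=4$, I would again freeze $x$ as in Theorem \ref{thm16}; the single new point is that the $t^4$-coefficient of the quartic is now the quadratic $\alpha x^2+\beta x$ in $x$, with $\alpha,\beta$ the leading coefficients of $f_4,g_4$, rather than a linear expression. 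I would make it a square by parametrising the conic $\alpha X^2+\beta X=w^2$ through its rational point at the origin, taking $x=\beta/(u^2-\alpha)$, so that the coefficient becomes $\bigl(u\beta/(u^2-\alpha)\bigr)^2$; from there the computation proceeds verbatim as in Theorem \ref{thm16}, and non-evenness is once more what prevents the final denominator from vanishing.

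In each case the resulting $\sigma$ has $y\not\equiv0$, so it is neither the origin nor the two-torsion point $(0,0)$. I expect the genuine difficulty to lie in the non-torsion verification, which is subtler than in Section 2: because these curves always possess the rational point $(0,0)$ of order two (and may carry further torsion), excluding $y=0$ no longer excludes torsion. Following Theorem \ref{thm16}, I would instead complete the cube to bring $\cal{E}_\psi$ to a model $y^2=x^3+m(u)x+n(u)$ with $m,n\in\Z[u]$ and invoke the integrality remark opening this section: a torsion section has coordinates in $\Z[u]$, so it suffices to check that the $x$-coordinate of $2\sigma$ lies in $\Q(u)\setminus\Q[u]$, whereupon $\sigma$ cannot be of finite order. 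This duplication computation, together with tracking the several non-vanishing conditions on the parameters, is the laborious part; as the authors remark for Theorem \ref{thm16}, it is effectively impossible to carry out without a computer.
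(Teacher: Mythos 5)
Your proposal matches the paper's (largely omitted) proof: the paper dispatches parts (1) and (2) by remarking that the reasoning is exactly that of Theorem~\ref{thm1} (applied in the model $XY^2=X^2+f_{2}(t)X+f_{4}(t)$, with Theorem~\ref{thm2}'s point-at-infinity trick covering the $\op{deg}f_{4}=4$ subcase), and for part (3) it uses precisely your parametrization $x=b/(u^2-a)$ of the conic $ax^2+bx=w^2$ to make the $t^4$-coefficient of the resulting quartic in $t$ a square. Your extra care about the rational two-torsion point $(0,0)$ and the duplication/integrality check for non-torsion is more than the paper supplies --- it merely asserts the section is non-torsion --- but it is the correct way to close that step.
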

\begin{proof}
The proof of part (1) and (2) does not bring any difficulties, and
therefore, it will be omitted (the reasoning is exactly the same
as in the proof of Theorem \ref{thm1}).

We will now outline the proof of part (3) of our theorem. Let
$a,\;b$ be leading coefficients of the polynomials $f_{4},\;g_{4}$
respectively. Let us put $x=b/(u^2-a)$ and treat $\cal{E}$ as a
curve defined over $\Q(u)$. Let us denote this curve by $C_{4}$.
Then, the point at infinity, say $P$, on $C_{4}$ is rational.
Because at least one of the polynomials $f_{4},\;g_{4}$ is not
even, with the use of point $P$ we can construct a non-torsion
section on $\cal{E}$.
\end{proof}

Our previously considered elliptic surfaces (excluding the surface
from part (3) of our Theorem \ref{thm17}) were rational over $\C$.
It means that they are rational over a certain finite extension of
the field $\Q$. It is clear that the questions about such surfaces
can be also asked about general elliptic surfaces.

For $t\in\Q$ let us denote by $\cal{E}_{t}$ the fibre of the
mapping $\pi:\cal{E}\rightarrow \mathbb{P}$ over $t$. In 1992 B.
Mazur proposed an interesting conjecture concerning rational
points on $\cal{E}$.

\begin{conj}\label{conj3}{\rm (Conjecture 4 from \cite{Maz})}

\noindent
 A family of elliptic curves $\{\cal{E}_{t}\}_{t\in\Q}$
fulfills one of the following conditions
\begin{enumerate}
\item for all but finitely many $t\in\Q$ the curve $\cal{E}_{t}$
has the Mordell-Weil rank equal to zero, \item for a dense set of
rational numbers $t$, the Mordell-Weil rank of the curve
$\cal{E}_{t}$ is positive.
\end{enumerate}
\end{conj}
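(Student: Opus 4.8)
The plan is to split the analysis according to the generic Mordell--Weil rank of the family, that is, the rank $\rho$ of $\cal{E}$ regarded as an elliptic curve over $\Q(t)$, which is finite by the Mordell--Weil type theorem for $\cal{E}$ recalled in the Introduction. If $\rho\geq 1$, then $\cal{E}$ carries a non-torsion section $\sigma$, and Silverman's specialization theorem (\cite{Sil}, page 368) gives that for all but finitely many $t\in\Q$ the specialization $\sigma_{t}$ is again non-torsion, so $\cal{E}_{t}$ has positive rank. Hence the positive-rank locus is cofinite in $\Q$, in particular dense, and alternative (2) holds. This case demands no new idea beyond those already used in the paper: each theorem of Sections 2--3 produces, after a suitable rational base change, exactly such a non-torsion section, and therefore verifies alternative (2) for the corresponding reparametrized family.

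The whole difficulty is thus concentrated in the case $\rho=0$, where the generic fibre has rank zero and one must decide whether the set of $t$ at which the rank \emph{jumps} to a positive value is finite or dense. Here I would first attempt to control these jumps through the variation of the root number $W(\cal{E}_{t})\in\{\pm 1\}$: by the parity heuristic a fibre with $W(\cal{E}_{t})=-1$ is expected to have odd, hence positive, rank. If one could exhibit a dense set of $t$ with $W(\cal{E}_{t})=-1$, then (conditionally on the parity conjecture) alternative (2) would follow, whereas if $W(\cal{E}_{t})=+1$ for all but finitely many $t$ one would expect rank zero to persist and alternative (1) to hold. The global sign factors as a product of local root numbers $W_{v}(\cal{E}_{t})$ with $W_{v}=+1$ for all but finitely many $v$, so it is an eventually periodic function of $t$ governed by the primes of bad, and especially additive, reduction; computing this function explicitly for a given family is the natural first step.

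The main obstacle is that neither horn of this dichotomy is accessible by present techniques. On the one hand the implication ``$W=-1\Rightarrow \op{rank}>0$'' is the parity conjecture, open in general; on the other hand, even granting it, deciding whether the sign $-1$ occurs densely or only finitely often requires understanding the distribution of $W(\cal{E}_{t})$ over $t\in\Q$, a question tied to the behaviour of Selmer groups and to equidistribution phenomena that remain conjectural. Crucially, when $\rho=0$ one cannot manufacture a non-torsion section by the method of Sections 2--5, since those constructions produce precisely a nonzero section and hence force $\rho\geq 1$; the explicit rational-curve approach developed here therefore breaks down completely in the only case that matters. For this reason I expect that a proof, if one exists, must draw on analytic input (root-number and moment computations, or progress on the Birch--Swinnerton-Dyer and parity conjectures) rather than on the geometric parametrizations of this paper, and this is why the statement is recorded as a conjecture rather than a theorem.
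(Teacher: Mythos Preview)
The statement you were asked to prove is recorded in the paper as a \emph{conjecture} (due to Mazur), and the paper offers no proof of it; it is cited as motivation for the surrounding discussion and for Conjecture~\ref{conj4} and Theorem~\ref{thm18}. Your proposal is not a proof either, and you are candid about this: you reduce the question to the generic-rank dichotomy, dispatch the case $\rho\geq 1$ correctly via Silverman's specialization theorem (exactly the mechanism the paper itself invokes whenever it has produced a non-torsion section), and then explain why the case $\rho=0$ is inaccessible with current tools. That assessment is accurate and matches the paper's stance.

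Two minor comments. First, your remark that the rational-curve method of Sections~2--5 ``breaks down completely'' when $\rho=0$ slightly overstates the situation: those constructions work after a rational base change $t=\psi(u)$, and the image $\psi(\Q)$ can be dense in $\Q$, so in favourable cases they \emph{do} yield alternative~(2) for the original parameter $t$ even though the section lives only over $\Q(u)$. The genuine obstruction is rather that one has no a~priori way to manufacture such a $\psi$ for an arbitrary family. Second, your appeal to root numbers and the parity conjecture is exactly the line taken in the references \cite{Roh} and \cite{Man} that the paper cites immediately after stating the conjecture; so your heuristic is well aligned with the literature the paper points to, but as you note it remains conditional.

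In short: there is nothing to compare against, since the paper gives no proof; your write-up is a reasonable explanation of why the statement is open, and your handling of the $\rho\geq 1$ case is correct.
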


As pointed in \cite{Maz}, the only known example of a family of
elliptic curves fulfilling the condition (1) of the above
conjecture is the constant family with the rank equal to zero, and
it seems quite probable that if the family
$\{\cal{E}_{t}\}_{t\in\Q}$ is non-split, then (1) is not valid.
Examples of families of elliptic curves fulfilling condition (2)
of the above conjecture can be found in
\cite{KuWa},\;\cite{Roh},\;\cite{Man}.

We believe that the following conjecture can be easier to prove

\begin{conj}\label{conj4}
For a non-constant family of elliptic curves
$\{\cal{E}_{t}\}_{t\in\Q}$ there is $t\in\Q$ such that the curve
$\cal{E}_{t}$ has infinitely many rational points.
\end{conj}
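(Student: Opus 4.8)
The plan is to split the problem according to the Mordell--Weil rank of $\cal{E}$ regarded as an elliptic curve over $\Q(t)$. First I would dispose of the easy regime: if this rank is positive, there is a non-torsion section defined over $\Q$, and Silverman's specialization theorem (\cite{Sil}) says that for all but finitely many $t\in\Q$ its value is a point of infinite order on $\cal{E}_{t}$; in particular infinitely many fibres then carry infinitely many rational points, which is more than the conjecture demands. Thus the entire content is concentrated in the case $\op{rank}\cal{E}(\Q(t))=0$, where no section helps and one must produce rational points on individual fibres by a different device.

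In that rank-zero regime the natural approach, and the one underlying every construction in this paper, is to replace a section by a rational multisection found by an explicit ansatz. Concretely I would write the coordinates of a putative point as low-degree polynomials in a fresh parameter $T$ with undetermined coefficients, substitute into the defining equation, and force the top-degree coefficients in $T$ to vanish; if enough of them can be killed, the remaining equation in $T$ has a rational root $T=\psi$, which is exactly a rational base change $t=\psi(s)$, and the induced section of $\cal{E}_{\psi}$ is then confirmed to be non-torsion by the torsion classification, precisely as in Theorems \ref{thm1}, \ref{thm2} and \ref{thm16}. A final application of Silverman's theorem converts this into infinitely many $t$ of positive rank. The feasibility of the ansatz rests on the surface being rational over $\Q$, so that sufficiently many defining coefficients are free to be solved; this reduces the conjecture to the geometric problem of exhibiting, for an arbitrary non-constant family, a genus-zero (or positive-rank genus-one) multisection over $\Q$ on which the fibration restricts to a point of infinite order.

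The main obstacle is exactly this last reduction once the surface ceases to be rational. For the small-degree families treated above the surface is swept out by rational curves and the ansatz succeeds, but for a K3 or general-type elliptic surface there is no reason for a low-genus multisection to exist even over $\overline{\Q}$, let alone descend to $\Q$, and then locating a single fibre of positive rank is an arithmetic question of the same difficulty as the conjecture itself. Here the only realistic route I can see is analytic: for a non-isotrivial family the root number $w(\cal{E}_{t})$ is non-constant, and one would try to show $w(\cal{E}_{t})=-1$ for infinitely many $t\in\Q$, forcing odd analytic rank and hence, granting the relevant case of the Birch and Swinnerton-Dyer conjecture, positive algebraic rank. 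The hard part, and the reason the statement remains a conjecture rather than a theorem, is that this last implication is unconditional only in special families, so no single argument is presently known that covers an arbitrary non-constant family $\{\cal{E}_{t}\}_{t\in\Q}$ (compare the discussion following Conjecture \ref{conj3} and \cite{Maz}).
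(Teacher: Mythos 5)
You were asked to prove Conjecture \ref{conj4}, but the paper offers no proof of it: it is stated explicitly as an open conjecture, proposed as a hopefully more accessible variant of Mazur's Conjecture \ref{conj3}, and its only role in the paper is as a hypothesis for the conditional Theorem \ref{thm18}. There is therefore no proof in the paper to compare yours against, and on its own terms your write-up is not a proof either. The one case you actually settle --- $\op{rank}\cal{E}(\Q(t))>0$, via Silverman's specialization theorem --- is correct but is the trivial direction, giving far more than the conjecture asks. The entire content of the statement lies in the case $\op{rank}\cal{E}(\Q(t))=0$, and there your argument consists of describing two strategies (the polynomial multisection ans\"atze of Theorems \ref{thm1}, \ref{thm2} and \ref{thm16}, and a root-number parity argument) together with an accurate account of why each fails in general: the ansatz method needs the surface to be rational over $\Q$, or at least to carry a genus-zero or positive-rank genus-one multisection over $\Q$, which a general non-constant family (e.g.\ of K3 or general type) need not possess; and the parity route is conditional on the Birch and Swinnerton-Dyer conjecture. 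Your own closing admission that the statement ``remains a conjecture rather than a theorem'' concedes the gap: nothing beyond the specialization argument is established.

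The salvageable content is the correct framing: your first paragraph proves the conjecture for every family of positive generic rank, and the explicit constructions of Sections 2, 3 and 5 prove it for the particular low-degree families treated there (indeed Theorem \ref{thm18} shows that, granting the conjecture, one fibre of positive rank already propagates to infinitely many). For an arbitrary non-constant family the question is genuinely open, and the paper makes no claim otherwise.
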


As a corollary from the above conjecture we obtain an interesting

\begin{thm}\label{thm18}
Let us assume that Conjecture \ref{conj4} is true. Then for a
non-constant family of elliptic curves $\{\cal{E}_{t}\}_{t\in\Q}$,
the set of rational numbers $t$ such that the rank of
$\cal{E}_{t}$ is positive, is infinite.
\end{thm}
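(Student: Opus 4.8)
The plan is to argue by contradiction, using a base change that lets me invoke Conjecture \ref{conj4} a second time on a slightly different family. Recall first that for an elliptic curve over $\Q$ the Mordell-Weil group is finitely generated, so ``infinitely many rational points'' is equivalent to ``positive rank''; thus Conjecture \ref{conj4} in fact produces values of $t$ at which the rank is positive. Let $S=\{t\in\Q:\cal{E}_{t}\text{ has positive rank}\}$ and suppose, for contradiction, that $S$ were finite. By Conjecture \ref{conj4} applied to $\{\cal{E}_{t}\}_{t\in\Q}$ itself the set $S$ is non-empty, so we may write $S=\{t_{1},\ldots,t_{n}\}$ with $n\geq 1$. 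The goal is then to manufacture a rational value of the parameter lying in $S$ yet outside $\{t_{1},\ldots,t_{n}\}$.

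First I would construct a base change $t=\alpha(s)$ whose rational image misses every $t_{i}$. A degree-two map already suffices: choose any $c\in\Q$ with $c>\max_{i}t_{i}$ and set $\alpha(s)=s^{2}+c$. Since a rational (indeed real) square is non-negative, $\alpha(s)=s^{2}+c>c>t_{i}$ for every $s\in\Q$ and every $i$, so $\alpha(s)\neq t_{i}$ for all $s$. The second step is to verify that the base-changed family $\{\cal{E}_{\alpha(s)}\}_{s\in\Q}$ is again non-constant, so that Conjecture \ref{conj4} applies to it. This holds because $\alpha$ is non-constant: if $\cal{E}$ is non-isotrivial then its $j$-invariant $j(t)$ is a non-constant rational function, hence so is the composite $j(\alpha(s))$; and if $\cal{E}$ is isotrivial but non-split, the twisting data (for instance the coefficient $f(\alpha(s))$ in the families of Sections 2 and 3) stays non-constant after composing with $\alpha$. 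In either case the fibres of the new family are not all isomorphic.

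Applying Conjecture \ref{conj4} to $\{\cal{E}_{\alpha(s)}\}_{s\in\Q}$ now yields some $s_{0}\in\Q$ for which $\cal{E}_{\alpha(s_{0})}$ has infinitely many rational points, hence positive rank. Setting $t'=\alpha(s_{0})\in\Q$, the curve $\cal{E}_{t'}$ has positive rank, so $t'\in S$; but $t'=\alpha(s_{0})\neq t_{i}$ for all $i$, contradicting $S=\{t_{1},\ldots,t_{n}\}$. Hence $S$ is infinite, as claimed. I expect the only genuinely delicate point to be the middle step, namely pinning down precisely what ``non-constant family'' is taken to mean and confirming that this property survives composition with a non-constant base change; the construction of $\alpha$ and the equivalence between infinitude of rational points and positive rank are entirely routine, and the whole argument is formal once the contradiction set-up is in place.
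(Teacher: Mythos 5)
Your overall strategy---iterate Conjecture \ref{conj4} after a polynomial base change whose rational image avoids the finitely many known good parameters---is exactly the paper's strategy (the paper runs it as an induction rather than a contradiction, and uses a polynomial $h$ with $h(t)=t_{i}$ unsolvable in $\Q$, which your $\alpha(s)=s^{2}+c$ realizes very cleanly). But the step you yourself flag as delicate contains a genuine error: it is \emph{not} true that non-constancy of the family survives composition with an arbitrary non-constant polynomial. Your argument is sound in the non-isotrivial case, since $j(\alpha(s))$ remains non-constant; in the isotrivial case, however, a non-constant coefficient does not give a non-constant family, because $y^{2}=x^{3}+d(s)x$ is a constant family whenever $d(s)$ is a constant times a fourth power in $\Q(s)$, which can happen with $d$ non-constant. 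Concretely, take $\cal{E}_{t}:y^{2}=x^{3}+(t-c)^{2}x$. This is a non-constant family (the fibres at $t=c+1$ and $t=c+3$ are $y^{2}=x^{3}+x$ and $y^{2}=x^{3}+9x$, which are not $\Q$-isomorphic since $9$ is not a fourth power), yet under your base change $t=s^{2}+c$ every fibre becomes $y^{2}=x^{3}+s^{4}x\cong y^{2}=x^{3}+x$, a constant family of rank $0$. Conjecture \ref{conj4} then cannot be applied to $\{\cal{E}_{\alpha(s)}\}$, and no contradiction is reached. Since the surfaces of Sections 2--4, the paper's main concern, are all isotrivial, this is not a peripheral case.

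The gap is fixable, but the fix is precisely the content you omitted: one must show that a \emph{suitable} base change preserves non-constancy. For $\alpha(s)=s^{2}+c$ one can check that only finitely many $c$ are bad (for instance, in the quadratic-twist case the polynomial $\prod_{i}(s^{2}+c-r_{i})$, where the $r_{i}$ are the roots of the squarefree twisting polynomial, is itself squarefree for all but finitely many $c$ and hence not a square in $\Q[s]$), so a good $c>\max_{i}t_{i}$ exists---but this must be stated and proved, and the quartic and sextic twist cases handled likewise. The paper confronts the analogous difficulty with heavier machinery: besides $h(t)\neq t_{i}$ it requires that the system (\ref{R16}) have no rational solutions, so that no fibre of the new family is even $\Q$-isomorphic to any $\cal{E}_{t_{i}}$, and the existence of such an $h$ is where the non-constancy hypothesis (the polynomial $H$ is not a square) and Faltings' theorem enter. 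Your route, once the bad-$c$ issue is repaired, yields the literal statement more economically, at the price of not controlling the isomorphism classes of the fibres produced.
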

\begin{proof}
Assuming Conjecture \ref{conj4} to be true, we find $t_{1}\in\Q$
such that infinitely many rational points lie on the curve
$\cal{E}_{t_{1}}$. Suppose that we already constructed
$t_{2},\;\ldots,\;t_{n}$ such that the curve $\cal{E}_{t_{i}}$ for
$i=1,\;\ldots,\;n$ has infiniteley many rational points. Let us
further suppose that it is possible to find a polynomial
$h\in\Q[t]$, such that for $i=1,\;\ldots,\;n$ the equation
$h(t)=t_{i}$ does not have a solution in rationals and the system
of equations
\begin{equation}\label{R16}
\begin{cases}
A(t_{1})Y_{1}^4=A(h(T_{1})),\;B(t_{1})Y_{1}^6=B(h(T_{1})),\\ \hskip 2cm \vdots \\
A(t_{n})Y_{n}^4=A(h(T_{n})),\;B(t_{n})Y_{n}^6=B(h(T_{n})).
\end{cases}
\end{equation}
does not have solutions in rationals. From the assumption there
exists $t\in\Q$ such that the curve
$\cal{E}_{h(t)}:\;y^2=x^3+A(h(t))x+B(h(t))$ has a positive rank.
Defining now $t_{n+1}=h(t)$ and repeating the reasoning, we obtain
the statement of our theorem.

We will show now that there exists a polynomial $h\in\Q[t]$
fulfilling the above conditions. Let $h_{1}$ be polynomial in
$\Q[t]$ such that the equation $h_{1}(t)=t_{i}$ for
$i=1,\;\ldots,\;n$ does not have any solutions. Clearly it is
enough to show the existence of our polynomial for the first row
in the system of equations (\ref{R16}). Let us, therefore,
consider the system of equations
$A(t_{1})Y_{1}^4=A(h_{1}(T_{1})),\;B(t_{1})Y_{1}^6=B(h_{1}(T_{1}))$.
If $A(t_{1})B(t_{1})=0$, then this system has finitely many
rational solutions, and we will find $h_{2}\in\Q[t]$ such that the
polynomial $h=h_{1}\circ h_{2}$ fulfills the desired conditions.
Thus, let us assume that $A(t_{1})B(t_{1})\neq 0$. If
$(A(h_{1}(T_{1}))/A(t_{1}))^3\neq (B(h_{1}(T_{1}))/B(t_{1}))^2$,
then our system has at most $3\op{deg}(A\circ h_{1}) +
2\op{deg}(B\circ h_{1})$ solutions in $\Q$ and there exists
$h_{2}\in\mathbb{Q}[t]$ such that the polynomial $h=h_{1}\circ
h_{2}$ fulfills the desired conditions. In case when
$(A(h_{1}(T_{1}))/A(t_{1}))^3= (B(h_{1}(T_{1}))/B(t_{1}))^2$, the
problem is reduced to the examination of the curve
$C:\;Y_{1}^2=H(T_{1})$, where $H$ is a polynomial such that
$A(h_{1}(T_{1}))/A(t_{1})=H(T_{1})^2,\;B(h_{1}(T_{1}))/B(t_{1})=H(T_{1})^3$.
If the polynomial $H$ was a square of another polynomial, we would
obtain that the family $\{\mathcal{E}_{t}\}_{t\in\mathbb{Q}}$ is
constant, which contradicts the assumption. Therefore, we see that
the polynomial $H$ is not a square. Now, there exists a polynomial
$h_{2}$, such that the genus of the curve
$C':\;Y_{1}^2=H(h_{2}(T_{1}))$ is $\geq 2$. From the Faltings
theorem there are only finitely many rational points on $C'$; so
after a polynomial change of variable, we obtain a polynomial
fulfilling all the necessary conditions. Applying this reasoning
to the second,$\;\ldots,\;n$-th equation in the system (\ref{R16})
we obtain the statement of our theorem.
\end{proof}

In view of above theorem a natural question arises

\begin{ques}\label{ques3}
Are conditions (2) of Conjecture \ref{conj3} and Conjecture
\ref{conj4} equivalent?
\end{ques}


\bigskip

\bigskip

 \hskip 4.5cm       Maciej Ulas

 \hskip 4.5cm       Jagiellonian University

 \hskip 4.5cm       Institute of Mathematics

 \hskip 4.5cm       Reymonta 4

 \hskip 4.5cm       30 - 059 Krak\'{o}w, Poland

 \hskip 4.5cm      e-mail:\;{\tt Maciej.Ulas@im.uj.edu.pl}

\begin{thebibliography}{100}

\bibitem{Co}
I. Connell, {\sc APECS}: Arithmetic of Plane Elliptic Curves,
 avaliable from {\tt ftp.math.mcgill.ca/pub/apecs/}.

\bibitem{Fal}
G. Faltings, {\it Endlichkeitss\"{a}tze f\"{u}r abelsche
Variet\"{a}ten \"{u}ber Zahlk\"{o}rpern}, Invent. Math. {\bf 73},
(1983), 349-366.

\bibitem{KuWa} M. Kuwata, L. Wang, {\it Topology of rational points on
isotrivial elliptic surfaces}, Int. Math. Research Notices, {\bf
4} (1993), 113-123.

\bibitem{Man} E. Manduchi, {\it Root numbers of fibers of
elliptic surfaces}, Compos. Math. {\bf 99}, (1995), 33-58.

\bibitem{Maz}
B. Mazur, {\it The topology of rational points}, Experiment. Math.
{\bf 1} (1992), 35-45.

\bibitem{Mor}
L. J. Mordell, {\it Diophantine equations}, Academic Press,
London, 1969.

\bibitem{Roh} D. E. Rohrlich, {\it Variation of the root
number in families of elliptic curves}, Compos. Math. {\bf 87}
(1993), 119-151.

\bibitem{Sil}
 J. Silverman, {\it The Arithmetic of Elliptic
Curves}, Springer-Verlag, New York, 1986.

\bibitem{Whi}
R. F. Whitehead, {\it A rational parametric solution of the
indeterminate cubic equation $z^2=f(x,y)$}, J. Lond. Math. Soc.
{\bf 40} (1944), 68-71.


\end{thebibliography}
\end{document}